\newcommand{\multiline}[1]{%
  \begin{tabularx}{\dimexpr\linewidth-\ALG@thistlm}[t]{@{}X@{}}
    #1
  \end{tabularx}
}
\tikzstyle{legend_general}=[rectangle, rounded corners, thin,
\def\E{\ensuremath\mathbb{E}}
\newtheorem{theorem}{Theorem}[section]
\newtheorem{claim}[theorem]{Claim}
\newtheorem{lemma}[theorem]{Lemma}
\theoremstyle{definition}
\newtheorem{definition}[theorem]{Definition}
\def\epsilon{\varepsilon}
\title{Rainbow spanning trees in random subgraphs of dense regular graphs}
\author{Peter Bradshaw}
\address{Department of Mathematics, Simon Fraser University, Burnaby, Canada}
\email{pabradsh@sfu.ca}
\begin{document}
\begin{abstract}
We consider the following random model for edge-colored graphs. A graph $G$ on $n$ vertices is fixed, and a random subgraph $G_p$ is chosen by letting each edge of $G$ remain independently with probability $p$.
Then, each edge of $G_p$ is colored uniformly at random from the set $[n-1]$. 
A result of Frieze and McKay (Random Structures and Algorithms, 1994) implies that when $G = K_n$ and $p = (2 + \epsilon) \frac{\log n}{n}$ for some constant $\epsilon > 0$, then $G_p$ almost surely contains a rainbow spanning tree. In this paper, we show that if $G$ is a $d$-regular $\Omega(n)$-edge-connected graph, then when $p = (2 + \epsilon) \frac {\log n}{d}$ for some constant $\epsilon > 0$, $G_p$ almost surely contains a rainbow spanning tree. Our main tool is a new edge-replacement method for rainbow forests.
\end{abstract}
\maketitle

\section{Introduction}
Given a graph $G$ with a not necessarily proper edge-coloring $\phi:E(G) \rightarrow S$, and given a subgraph $H$ of $G$, we say that $H$ is a \emph{rainbow subgraph} of $G$ 
if $\phi$ assigns a distinct color to each edge of $H$. If $H$ is a rainbow subgraph of $G$ and is also a spanning tree of $G$, then $H$ is called a \emph{rainbow spanning tree}.
The problem of finding a rainbow spanning tree is a central problem in the theory of edge-colored graphs. Alon, Brualdi, and Shader \cite{ABS} proved that in any edge coloring of $K_n$ in which each color induces a complete bipartite graph, there exists a rainbow spanning tree. 
Furthermore, Brualdi and Hollingsworth \cite{Brualdi} proved that if the complete graph $G = K_{2n}$ is properly edge-colored with $2n-1$ colors, 
then  $G$ must admit two disjoint rainbow spanning trees, and they 
conjectured furthermore
that $E(G)$ can be partitioned into $n$ rainbow spanning trees.
Glock, K\"uhn,  Montgomery, and Osthus \cite{GKMO} recently proved that this conjecture holds when $n$ is sufficiently large, and they proved further that the $n$ rainbow spanning trees partitioning $E(G)$ can all be chosen to be isomorphic.
In the broader setting of general edge-colored graphs, Schrijver \cite{Schrijver} and Suzuki \cite{SuzukiGnC} independently proved a necessary and sufficient condition for the existence of a rainbow spanning tree in an edge-colored graph $G$, namely that $G$ contains a rainbow spanning tree if and only if the removal of all edges of any $k$ ($0 \leq k \leq n-2$) colors leaves at most $k+1$ components of $G$.

One particular graph class in which the existence of rainbow spanning trees has been extensively studied is the class of randomly constructed and randomly edge-colored graphs. Random graph constructions usually rely fully or partially on the model $G(n,p)$, which is defined as follows. We begin with a set $X$ of $n$ vertices and a value $0 < p < 1$, which may depend on $n$. Then, we say that a graph $G$ is randomly constructed from $G(n,p)$ if $G$ contains each potential edge of ${X \choose 2}$ independently with probability $p$. If a graph $G$ constructed from $G(n,p)$ satisfies some property $P$ with probability approaching $1$ as $n$ approaches infinity, then we say that $G$ satisfies $P$ \emph{asymptotically almost surely}, or \emph{a.a.s.}~for short.
Frieze and McKay \cite{FriezeMcKay} proved that if $G$ is randomly constructed from $G(n,p)$ using a value $p = (2 + \epsilon) \frac{ \log n}{n}$ with $\epsilon > 0$, and if each edge of $G$ is colored uniformly at random from the set $[n-1]$, then $G$ a.a.s.~contains a rainbow spanning tree. The coefficient of $2 + \epsilon$ in $p$ is close to tight, as when $p = (2 - \epsilon) \frac{ \log n}{n}$, $G$ is a.a.s.~missing at least one color from $[n-1]$. This probability threshold for the existence of a rainbow spanning tree
in a randomly colored graph of $G(n,p)$
may be compared to the probability threshold for the existence of an uncolored spanning tree in a random graph of $G(n,p)$; Erd\H{o}s and R\'enyi \cite{ER} proved that a random graph of $G(n,p)$ a.a.s.~contains a spanning tree when $p = (1 + \epsilon) \frac{\log n}{n}$ and a.a.s.~does not contain a spanning tree when $p = (1 - \epsilon) \frac{\log n}{n}$.

Aigner-Horev, Hefetz, and Lahiri \cite{AHL} showed additionally that certain random \emph{perturbations} of dense graphs---that is, the unions of prescribed dense graphs and sparse random graphs---often contain rainbow spanning trees when their edges are randomly colored. In particular, they proved that if $G$ is a graph on $n$ vertices with minimum degree $\delta n$ for some $\delta > 0$, and if $p = \omega(n^{-2})$, then the graph $G'$ obtained by taking the union of $G$ and a randomly constructed graph from $G(n,p)$ almost surely contains a rainbow spanning tree when each edge of $G'$ is colored uniformly at random from $[n-1]$. Furthermore, they showed that when $p = \omega(n^{-1})$, if $G'$ is obtained by taking the union of the same graph $G$ and a random graph of $G(n,p)$, then after each edge of $G'$ is randomly colored from the set $[(1 + \alpha) n]$ for some constant $\alpha > 0$, $G'$ a.a.s.~contains a rainbow spanning tree isomorphic to any prescribed tree $T$ of bounded degree on $n$ vertices.

In this paper, we will 
similarly consider rainbow spanning trees in graphs that are randomly constructed from dense graphs.
However, rather than considering random perturbations of dense graphs, we will consider random subgraphs of dense graphs. Namely, we will show that under certain conditions, if we take a random subgraph of a dense regular graph and then randomly color the edges of the resulting graph, then the edge-colored graph that we obtain a.a.s.~contains a rainbow  spanning tree. We will use the following model. Given a graph $G$ on $n$ vertices and a value $0 < p < 1$, we define $G_p$ as a random graph obtained by keeping each edge of $G$ with probability $p$; in other words, $G_p$ is a graph on $V(G)$ whose edge set is the union $E(G) \cap G^*$ for some random graph $G^*$ on $V(G)$ constructed from $G(n,p)$. If $G = K_n$, then $G_p$ is simply a random graph constructed from $G(n,p)$. Chung, Horn, and Lu \cite{CHL} study the emergence of a giant component in this random subgraph model, and Horn 
\cite{HornThesis}
dedicates an entire thesis to the study of relationships between a host graph $G$ and a random subgraph $G_p$. In fact, this random subgraph model frequently appears in stochastic theory under the name of \emph{bond percolation}, in which case the host graph $G$ is often a lattice $\mathbb Z^d$ \cite{Barsky, Grimmett}.

The main result of our paper is the following theorem.

\begin{theorem}
\label{thmTree}
Let $G$ be a $d$-regular $\lambda$-edge-connected graph on $n$ vertices with $d \geq \lambda = \Omega (n) $. Let $p = (2 + \epsilon) \frac{ \log n}{d}$, where $\epsilon  > 0$ is a constant. If each edge of the random subgraph $G_p$ is colored uniformly at random from the set $[n-1]$, then $G_p$ a.a.s.~contains a rainbow spanning tree.
\end{theorem}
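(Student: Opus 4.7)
The plan is to verify the Schrijver--Suzuki criterion for $G_p$: the colored $G_p$ contains a rainbow spanning tree if and only if, for every $0 \leq k \leq n-2$ and every set $S$ of $k$ colors, the graph $G_p - E_S$ (with $E_S$ the set of $G_p$-edges colored in $S$) has at most $k+1$ connected components. First, by standard Chernoff bounds, I would establish that a.a.s.\ every vertex of $G_p$ has degree concentrated around $pd = (2+\epsilon)\log n$, the total edge count is concentrated around $pdn/2 \sim (1 + \epsilon/2)\,n\log n$, and every color in $[n-1]$ appears on at least one edge of $G_p$; the last claim follows because the probability a given color is unused is at most $(1 - 1/(n-1))^{|E(G_p)|} \leq n^{-(1+\epsilon/2-o(1))}$, and a union bound over the $n-1$ colors gives an $o(1)$ failure probability.

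The main step uses the edge-replacement method to pass from a non-spanning rainbow forest to a Schrijver--Suzuki certificate. Take a maximum-size rainbow forest $F \subseteq G_p$, with components $C_1, \ldots, C_r$ and using the color set $T \subseteq [n-1]$, and assume $F$ is not spanning. For any pair of components $C_i, C_j$, every $G_p$-edge between them must have color in $T$, else one could extend $F$ with an edge of missing color. The edge-replacement argument iterates local swaps---removing an edge $f \in F$ of color $c \in T$ and replacing it with another $c$-colored edge of $G_p$---to explore the family $\mathcal{F}$ of rainbow forests reachable from $F$. Using matroid-intersection-style reasoning, this produces a partition $V = V_1 \sqcup \cdots \sqcup V_{k+2}$ for some $k$, and a subset $S \subseteq T$ with $|S| = k$, such that every $G_p$-edge with endpoints in different parts has color in $S$.

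To complete the proof, I would show that no such certificate exists a.a.s.\ via a union bound. For each $k$, $S$, and partition $(V_1, \ldots, V_{k+2})$, the probability that every $G$-edge crossing the partition is either absent from $G_p$ or colored in $S$ is at most $\exp\!\bigl(-p\tfrac{n-1-k}{n-1}e\bigr)$, where $e = e_G(V_1, \ldots, V_{k+2})$. I would bound $e$ from below using both $\lambda$-edge-connectivity, giving $e \geq \tfrac{1}{2}(k+2)\lambda = \Omega((k+2)n)$, and $d$-regularity, giving $e \geq \tfrac{1}{2}\sum_i |V_i|(d - |V_i| + 1)$, which is strong when some parts are small. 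Summing over $k$, over the $\binom{n-1}{k}$ choices of $S$, and over partitions stratified by the multiset of part sizes, the aim is to show the total bound is $o(1)$.

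The principal obstacle is managing the entropy of partitions: there are up to $(k+2)^n$ partitions of $V$ into $k+2$ parts, threatening to overwhelm the probability estimate from edge-connectivity alone. Overcoming this requires combining the two cut bounds above, so that balanced partitions with many parts are controlled by $\lambda$-edge-connectivity while partitions with small parts are controlled by the $d$-regularity bound. I expect the crux of the argument to be verifying that the edge-replacement method produces a certificate rigid enough for this union bound to succeed, and the hypothesis $\lambda = \Omega(n)$ is what makes the combined cut estimate dominate the partition entropy.
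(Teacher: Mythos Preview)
Your plan is quite different from the paper's, and the union-bound step has a genuine gap that you flag but do not close.

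Consider already the case $k=0$ of Schrijver--Suzuki, which is simply connectivity of $G_p$. For a bipartition $(A,\bar A)$ with $|A|=a\le n/2$, the failure probability is at most $\exp\bigl(-p\,e_G(A,\bar A)\bigr)$. Your $d$-regularity bound gives $e_G(A,\bar A)\ge a(d-a+1)$, useful only for $a<d$; the symmetric bound on $\bar A$ is useful only for $n-a<d$; and the edge-connectivity bound gives $e_G(A,\bar A)\ge\lambda$. When $d$ is bounded away from $n$ (e.g.\ $d=n/3$, $\lambda=n/10$, both permitted by the hypotheses), every $a\in(d,\,n-d)$ falls outside the reach of either regularity bound, and one is left with $p\,e_G(A,\bar A)\ge p\lambda=\Theta(\log n)$, hence a failure probability that is only $n^{-\Theta(1)}$. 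But there are $\binom{n}{a}=2^{\Theta(n)}$ such bipartitions, so the union bound over cuts diverges already here; for larger $k$ the partition entropy is worse, not better. Your edge-replacement step is described only as producing ``a partition'' via ``matroid-intersection-style reasoning''; for the union bound to be salvageable the certificate would have to lie in a family of size $\exp\bigl(O(k\log n)\bigr)$ rather than $\exp\bigl(\Theta(n\log k)\bigr)$, and nothing in the proposal indicates why that should hold.

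The paper avoids union bounds over partitions entirely by a constructive multi-round exposure. It writes $p'=p+\sum_t s_t$ with $p=(2+\tfrac{\epsilon}{2})\log n/d$ and very small $s_t$, and works in $G'=G_p\cup\bigcup_t G_{s_t}$. First, via an auxiliary bipartite matching (vertices versus colours) and a cycle count in a random functional digraph, it finds a.a.s.\ a rainbow forest $F_0\subseteq G_p$ with at least $n-\log^3 n$ edges. Edge replacement is then used for a different purpose: a deterministic lemma (valid once $G_p$ is typical) shows that for any maximal rainbow forest $F^*$ missing a colour $\sigma$, there are at least $n/\log\log n$ colours $j$ such that some rainbow forest with the same component partition uses $\sigma$ and misses $j$ instead. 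With this many admissible colours, a freshly revealed sparse $G_{s_t}$ a.a.s.\ contains an edge of one of them joining two components, and the forest gains an edge. Iterating $O(\log^3 n)$ times completes the tree; each step is a single event of failure probability $o((\log n)^{-3})$, and multiple exposure keeps the steps independent, so no sum over exponentially many partitions is ever taken.
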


We note that the coefficient of $2 + \epsilon$ in $p$ cannot be relaxed without placing additional requirements on $G$. Indeed, if $d = (1 - o(1))n$ and $p = \frac{2 \log n}{d}$,
then a standard argument shows that the probability of every color from $[n-1]$ appearing at least once in $G_p$ is at most $ (1 + o(1))e^{-1}$. Furthermore, for any value $d = \Omega(n)$,
a standard second-moment argument shows that $p$ cannot be reduced to $(1 - \epsilon) \frac{\log n}{d}$ without a.a.s.~introducing an isolated vertex in $G_p$.

We outline our strategy for proving Theorem \ref{thmTree} as follows. Rather than considering a single random subgraph of $G$, we will consider a family $\mathcal G$ of many randomly constructed subgraphs of $G$, and we will aim to find a rainbow spanning tree in a random coloring of the graph $G' = \bigcup \mathcal G$. Specifically, $\mathcal G$ will consist of
one subgraph $G_{p}$ which is generated by keeping each edge of $G$ with probability 
$p = (2 + \frac{1}{2}\epsilon) \frac{\log n}{d}$
and many sparse subgraphs which are generated by keeping each edge of $G$ with probability less than $\frac{\sqrt{\log n}}{n}$.
We will show that $G_{p}$ a.a.s.~contains a rainbow forest whose edges are missing at most $\log^3 n$ colors, and then, using our random sparse subgraphs and an edge replacement argument, we will add these remaining colors to our rainbow forest, ultimately constructing a rainbow spanning tree in $G'$.

\section{Tools}
\label{secTools}
We establish some tools that we will use throughout the proof of Theorem \ref{thmTree}. First,
 we will need the well-known Markov inequality, which appears as Theorem 3.1 of \cite{Mitzenmacher}.
\begin{theorem}
\label{thmMarkov}
Let $Y$ be a nonnegative random variable. For any value $a>0$, 
$$Pr(Y \geq a ) \leq \frac{\E[Y]}{a}.$$ 
\end{theorem}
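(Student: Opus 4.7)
The plan is to prove Markov's inequality by the standard indicator-function argument: bound $Y$ below by a simple random variable supported on the event $\{Y \geq a\}$, then take expectations and divide by $a$. Since the statement is a textbook fact, I expect no genuine obstacle; the only thing to be careful about is keeping the sign conditions $Y \geq 0$ and $a > 0$ visible so that all inequalities are justified.

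Concretely, I would introduce the indicator $\mathbf{1}[Y \geq a]$ and observe the pointwise inequality
$$a \cdot \mathbf{1}[Y \geq a] \leq Y,$$
which holds because on the event $\{Y \geq a\}$ the left side equals $a \leq Y$, and on the complementary event the left side equals $0$, which is at most $Y$ by nonnegativity of $Y$. Taking expectations on both sides and using linearity gives
$$a \cdot \Pr(Y \geq a) = \E\bigl[a \cdot \mathbf{1}[Y \geq a]\bigr] \leq \E[Y].$$
Dividing through by $a > 0$ yields the claimed bound $\Pr(Y \geq a) \leq \E[Y]/a$.

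An equivalent route, if the indicator formulation is not preferred, is to split the expectation via
$$\E[Y] = \E\bigl[Y \cdot \mathbf{1}[Y \geq a]\bigr] + \E\bigl[Y \cdot \mathbf{1}[Y < a]\bigr] \geq \E\bigl[Y \cdot \mathbf{1}[Y \geq a]\bigr] \geq a \cdot \Pr(Y \geq a),$$
where the first inequality uses that the dropped term is nonnegative (since $Y \geq 0$) and the second uses that $Y \geq a$ on the event in question. The hardest part is really just choosing which of these two essentially identical presentations to write down; there is no technical obstacle, and the proof fits in a few lines.
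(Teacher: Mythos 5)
Your proof is correct and is the standard textbook argument for Markov's inequality. The paper itself gives no proof of this statement---it simply cites it as Theorem 3.1 of Mitzenmacher and Upfal---so there is nothing to compare against; both of your presentations (the indicator bound $a\cdot\mathbf{1}[Y\geq a]\leq Y$ and the expectation split) are standard and equivalent, and either would be a fine way to fill in the citation.
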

Next, 
we will use the following form of the Chernoff bound, which appears in Chapter 4 of \cite{Mitzenmacher}. 
\begin{theorem}
\label{thmChernoff}
Let $Y$ be a random variable that is the sum of pairwise independent indicator variables---that is, variables taking values in $\{0,1\}$. Let $\mu$ be the expected value of $Y$. For any value $\delta \in (0,1)$,
$$\Pr(Y < (1 - \delta) \mu) \leq \left (  \frac{e^{-\delta}}{(1-\delta)^{1-\delta}} \right )^{\mu}.$$%
\end{theorem}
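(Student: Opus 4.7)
The plan is to use the standard exponential-moment (Chernoff--Cram\'er) method, bounding the lower tail by applying Markov's inequality to an appropriate exponentiated random variable and then optimizing over the exponent. Write $Y = Y_1 + \cdots + Y_n$ where each $Y_i$ is a $\{0,1\}$-indicator with $\E[Y_i] = p_i$, so that $\mu = \sum p_i$. I would fix a parameter $t > 0$, to be chosen later, and observe that since $x \mapsto e^{-tx}$ is strictly decreasing,
$$\Pr(Y < (1-\delta)\mu) = \Pr\bigl(e^{-tY} > e^{-t(1-\delta)\mu}\bigr) \leq e^{t(1-\delta)\mu}\, \E[e^{-tY}],$$
where the inequality is Theorem \ref{thmMarkov} applied to the nonnegative random variable $e^{-tY}$.

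Next, I would factor the moment generating function across the independent summands, writing $\E[e^{-tY}] = \prod_{i=1}^{n} \E[e^{-tY_i}]$, and bound each factor by
$$\E[e^{-tY_i}] = 1 + p_i(e^{-t} - 1) \leq \exp\bigl(p_i(e^{-t} - 1)\bigr),$$
using the elementary inequality $1 + x \leq e^x$ valid for all real $x$. Multiplying over $i$ and summing the $p_i$ gives $\E[e^{-tY}] \leq \exp(\mu(e^{-t}-1))$, and substituting back into the Markov bound yields
$$\Pr(Y < (1-\delta)\mu) \leq \exp\bigl(\mu(e^{-t} - 1) + t(1-\delta)\mu\bigr).$$

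Finally, I would minimize the right-hand side over $t > 0$. Differentiating the exponent with respect to $t$ and setting the derivative to zero gives $-\mu e^{-t} + (1-\delta)\mu = 0$, i.e., $t = \ln\bigl(\tfrac{1}{1-\delta}\bigr)$, which is positive precisely because $\delta \in (0,1)$. Substituting this optimum converts $e^{-t}$ into $(1-\delta)$ and $t(1-\delta)\mu$ into $-(1-\delta)\ln(1-\delta)\,\mu$, producing the clean closed form
$$\exp\bigl(-\delta \mu - (1-\delta)\ln(1-\delta)\,\mu\bigr) = \left(\frac{e^{-\delta}}{(1-\delta)^{1-\delta}}\right)^{\mu},$$
which is the desired bound. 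The only real subtlety is the optimization step, and the only ingredient beyond algebra and Markov is the factorization of the exponential moment across the independent indicators; both are routine once the setup is in place.
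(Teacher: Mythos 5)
The paper does not prove this theorem; it is cited from Chapter 4 of \cite{Mitzenmacher}, so there is no in-paper proof to compare against. Your derivation is the standard exponential-moment (Chernoff) argument and is algebraically sound: the Markov step applied to $e^{-tY}$, the bound $1+x \leq e^x$, and the optimization at $t = \ln\tfrac{1}{1-\delta}$ all check out, and the final substitution reproduces the stated bound exactly.

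There is, however, one point you should not let pass silently. The factorization $\E[e^{-tY}] = \prod_i \E[e^{-tY_i}]$ requires the $Y_i$ to be \emph{mutually} independent, whereas the statement as worded asks only for pairwise independence. The bound is in fact false under pairwise independence alone: taking $k$ truly random bits $b_1,\dots,b_k$ and forming the $n = 2^k-1$ pairwise-independent uniform bits $X_S = \bigoplus_{i\in S} b_i$ over nonempty $S \subseteq [k]$, one has $\mu = n/2$ yet $\Pr(Y=0) \geq 2^{-k} = 1/(n+1)$, which is only polynomially small and contradicts the theorem's exponentially small bound once $\delta$ is taken close to $1$. The word ``pairwise'' in the statement is therefore a slip (the cited source states the result for mutually independent trials, and the paper's application in Lemma \ref{lemmaHitV} does in fact have mutually independent indicators, since distinct vertices have disjoint in-edge sets). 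Your proof is correct for the intended hypothesis, but you should flag explicitly that the independence you invoke at the moment-generating-function step is mutual, not merely pairwise.
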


Finally, we will need two lemmas about highly edge-connected graphs.
\begin{lemma}
\label{lem:cuts}
Let $G$ be a $\lambda$-edge-connected graph with $\lambda = \Omega(n)$, and let $p < \frac{\log^2 n}{n}$. Then, it a.a.s.~holds that the random subgraph $G_p$ contains at most $\frac{1}{2} |S|$ edges in each edge-cut $S$ of $G$.
\end{lemma}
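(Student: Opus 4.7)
The plan is to run an upper-tail Chernoff bound against every edge-cut $S$ of $G$ and then union-bound over all cuts, exploiting the fact that each cut is forced to be large by the connectivity hypothesis while the expected number of surviving edges is tiny. Concretely, fix an edge-cut $S$, and let $X_S$ denote the number of edges of $S$ that lie in $G_p$. Then $X_S$ is a sum of $|S|$ independent Bernoulli indicators with mean $\mu = p|S| \le |S|\log^2 n / n$, and we wish to bound $\Pr[X_S > |S|/2]$. The threshold $|S|/2$ sits at a factor of order $n/\log^2 n$ above the mean, so the standard upper-tail form $\Pr[X \ge t] \le (e\mu/t)^t$ (valid for $t \ge e\mu$) immediately yields
\[
\Pr\!\left[X_S > \tfrac{1}{2}|S|\right] \le (2ep)^{|S|/2} \le \left(\frac{2e \log^2 n}{n}\right)^{|S|/2}.
\]

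To pass to all cuts simultaneously, I would use the $\lambda$-edge-connectivity hypothesis to write $|S| \ge \lambda \ge cn$ for some constant $c > 0$ and all sufficiently large $n$, and observe that each edge-cut corresponds to a nonempty proper vertex bipartition of $V(G)$, so there are at most $2^{n-1}$ of them. A union bound then gives
\[
\Pr\!\left[\exists\, S : |E(G_p) \cap S| > \tfrac{1}{2}|S|\right] \le 2^{n-1}\left(\frac{2e\log^2 n}{n}\right)^{cn/2},
\]
and taking logarithms shows this bound equals $\exp\!\bigl(n\log 2 - (cn/2)\log n + O(n\log\log n)\bigr)$, which tends to $0$ as $n \to \infty$. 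This gives the desired a.a.s.~conclusion.

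The only real hurdle is a mild bookkeeping one: the version of Chernoff stated as Theorem~\ref{thmChernoff} in the paper is the lower-tail inequality, whereas this argument uses the upper tail. I would either quote the symmetric upper-tail bound as a folklore companion statement or derive $(e\mu/t)^t$ in a single Markov-exponential line. Beyond that, the argument has substantial slack: the single-exponential count $2^{n-1}$ of cuts is swamped by the doubly exponential decay coming from the extreme deviation, so no sharper enumeration of cuts (e.g.\ a Karger-type $n^{O(k)}$ bound on cuts of size at most $k\lambda$) is required to push things through.
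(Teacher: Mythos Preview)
Your proposal is correct and essentially identical to the paper's proof: the paper also bounds $\Pr[X_S \ge |S|/2]$ by $(2ep)^{|S|/2}$ (deriving it directly from $\binom{k}{k/2}p^{k/2}$ and a Stirling estimate rather than quoting the Chernoff form $(e\mu/t)^t$), then union-bounds over the at most $2^n$ edge-cuts and uses $|S|\ge\lambda=\Omega(n)$ to conclude. The only cosmetic difference is your remark about needing an upper-tail rather than lower-tail Chernoff, which the paper sidesteps by doing the one-line binomial computation from scratch.
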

\begin{proof}
Consider an edge-cut $S$ of $G$, and let $k = |S|$. The probability $P$ that $G_p$ contains at least $\frac{1}{2}k$ edges in $S$ is at most 
\begin{eqnarray*}
P \leq {k \choose {k/2}} p^{k/2} < \frac{(pk)^{k/2}}{(k/2)!} < \frac{(pk)^{k/2}}{(k/2e)^{k/2}} = \left ( 2ep \right )^{k/2} .
\end{eqnarray*}
Therefore, as $k \geq \lambda = \Omega(n)$,
\[\log P < \frac{k}{2} (2 \log \log n - \log n + \log 2 + 1) =- \Omega (n \log n).\]
This shows that $2^n P = o(1)$. Since $G$ contains only $2^n$ edge-cuts,
it follows from a union bound that the lemma holds.
\end{proof}

\begin{lemma}
\label{lemmaStraddle}
Let $G$ be a $\lambda$-edge-connected graph, and let $H$ be a subgraph of $G$ with $t \geq 2$ components.
If $\Tilde G$ is a subgraph of $G$ containing at most $\frac{1}{2}|S|$ edges in each edge-cut $S$ of $G$,
then at least $\frac{1}{4} \lambda t$ edges of $E(G) \setminus E(\Tilde G)$ join separate components of $H$.
\end{lemma}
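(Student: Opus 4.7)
The plan is to lower-bound, for each component of $H$ individually, the number of edges of $E(G)\setminus E(\tilde G)$ that leave that component, and then combine these bounds by a simple double count. Let $C_1,\dots,C_t$ denote the vertex sets of the components of $H$, and for each $i$ let $S_i$ be the edge-cut of $G$ between $C_i$ and $V(G)\setminus C_i$. Because $t\geq 2$, every $C_i$ is a proper nonempty subset of $V(G)$, so the $\lambda$-edge-connectivity of $G$ gives $|S_i|\geq \lambda$.

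Next I would feed the hypothesis on $\tilde G$ into each cut $S_i$ in turn. By assumption $\tilde G$ contains at most $\tfrac12|S_i|$ edges of $S_i$, so at least $\tfrac12|S_i|\geq \tfrac{\lambda}{2}$ edges of $S_i$ lie in $E(G)\setminus E(\tilde G)$. The final step is to sum these bounds over $i$. Any edge of $G$ whose endpoints lie in the same component $C_i$ belongs to none of the cuts $S_j$, while an edge joining two different components $C_i$ and $C_j$ lies in exactly two of them (namely $S_i$ and $S_j$). Hence if $N$ is the number of edges of $E(G)\setminus E(\tilde G)$ that join distinct components of $H$, then
$$2N \;=\; \sum_{i=1}^{t} \bigl| S_i \cap (E(G)\setminus E(\tilde G)) \bigr| \;\geq\; t\cdot \tfrac{\lambda}{2},$$
so $N\geq \tfrac14 \lambda t$, which is exactly the claimed bound.

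I do not anticipate a genuine obstacle here: the argument is a single double count combined with two input inequalities (edge-connectivity on one side, the structural hypothesis on $\tilde G$ on the other). The only point to be careful about is interpreting ``components of $H$'' as a partition of the vertex set of $G$ (with isolated vertices of $G$ not in $V(H)$ treated as singleton components if necessary), so that the cuts $S_i$ are genuine edge-cuts of $G$ to which the edge-connectivity and the $\tilde G$ hypothesis both apply.
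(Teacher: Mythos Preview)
Your argument is correct and is essentially the same as the paper's proof. The paper phrases it by contracting each component of $H$ to a single vertex in $G\setminus E(\tilde G)$, observing that the resulting multigraph on $t$ vertices has minimum degree at least $\tfrac{\lambda}{2}$, and then applying the handshake lemma; this is exactly your double count written in different language.
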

\begin{proof}
We obtain a graph $G_H$ on $t$ vertices from $G \setminus E(\Tilde G)$ by contracting each component of $H$ to a single vertex. Since $G$ is $\lambda$-connected, and since each component of $H$ has a boundary containing at least $\frac{1}{2}\lambda$ edges not belonging to $E(\Tilde G)$,
$G_H$ has minimum degree at least $\frac{1}{2}\lambda$. Therefore, 
$G_H$ has at least $\frac{1}{4}\lambda t$ edges, each of which corresponds to an edge in $E(G) \setminus E(\Tilde G)$ joining distinct components of $H$. 
\end{proof}

\section{Proof of Theorem \ref{thmTree}}
\label{secProof}
We fix a $d$-regular graph $G$ on $n$ vertices and a value $0 < \epsilon < 1$. 
 We tacitly assume that $n$ is large, and we omit floors and ceilings wherever they do not affect our arguments.
We write $\lambda$ for the edge-connectivity of $G$, and we assume that $d \geq \lambda = \Omega (n) $. We aim to show that 
when $p' = (2 + \epsilon) \frac{\log n}{d}$ for some $\epsilon > 0$,
a random subgraph $G_{p'}$ a.a.s.~contains a rainbow spanning tree when each edge is colored uniformly at random from the color set $[n-1]$.

\subsection{Strategy outline}
\label{secStrat}
We let $p = \frac{(2 + \frac{1}{2}\epsilon ) \log n}{d}$, and for $2 \leq t \leq  \lfloor \log^3 n \rfloor - 1  $, we let 
$s_t = \frac{ \sqrt{\log n} }{t n} $. 
In order to prove Theorem \ref{thmTree}, we will consider the union $G'$ of a family 
$\mathcal G = \{G_p\} \cup \{ G_{s_t}
: 2 \leq t \leq  \lfloor \log^3 n \rfloor - 1\}
$
of random subgraphs of $G$.
We assign each edge $e \in E(G')$ a color $\phi(e)$ uniformly at random from the set $[n-1]$.
Each edge of $G$ belongs to $G'$ with probability at most
\begin{eqnarray*}
p + s_2 + \dots+ s_{\lfloor \log^3 n \rfloor - 1 } &= & \left  (2 + \frac{1}{2}\epsilon \right )\frac{\log n}{d}   + \frac{ \sqrt{ \log n} }{ n  }\left ( \frac{1}{2} + \frac{1}{3} + \dots + \frac{1}{\lfloor \log^3 n \rfloor - 1} \right ) \\
 & < & \left  (2 + \frac{1}{2}\epsilon \right )\frac{\log n}{d} +    \frac{ 4 \log \log n \sqrt{ \log n}}{n}  \\
 & < & (2 + \epsilon) \frac{ \log n}{d}
.
\end{eqnarray*}
Therefore, it suffices to show that $G'$ a.a.s.~contains a rainbow spanning tree.
We write $\phi:E(G') \rightarrow [n-1]$ for the random edge-coloring function of $G'$.
We note that as $p$ is much larger than $\sum_{t=2}^{  \lfloor \log^3 n \rfloor - 1 } s_t$, most of the edges in $G'$ come from $G_p$, and comparitively few edges come from a graph $G_{s_t}$. 
By Lemma \ref{lem:cuts}, for each edge-cut $S$ of $G$, we may assume a.a.s.~that $G'$ has at most $\frac{1}{2}|S|$ edges in $S$.

Throughout our argument, we would like to
assume that not only is $G'$ randomly edge-colored, but that
the subgraphs contained in $\mathcal G$
also have edges colored uniformly at random from the set $[n-1]$ and that the edge-colored graph $G'$ is obtained as a union of the edge-colored subgraphs in $\mathcal G$.
 However, this leads to a potential problem, as an edge $e \in E(G)$ may exist in two subgraphs in $\mathcal G$ with different colors, in which case the color $\phi(e)$ that $e$ receives in $G'$ is not well defined. To avoid this problem, we
 define the color $\phi(e)$ of each edge $e \in E(G')$ by the following rule.
 First, we randomly color each edge of each graph of $\mathcal G$ uniformly at random.
Next, we order the graphs in $\mathcal G$ as $G_p,G_{s_{\lfloor \log^3 n \rfloor - 1}},\dots, G_{s_2} $.
 Then, if some edge $e \in E(G)$ 
 appears in multiple graphs of $\mathcal G$,
 we let $\phi(e)$ be determined by the graph 
 that  
 appears earliest in the ordering of $\mathcal G$
 among all graphs containing the edge $e$. In particular, for each edge $e \in E(G_p)$, $\phi(e)$ is determined by the color of $e$ in $G_p$.
 It is easy to check that the edge-coloring function $\phi:E(G') \rightarrow [n-1]$ still colors each edge of $G'$
 uniformly at random from $[n-1]$ under these rules.
 In order to observe these rules, whenever we consider the color of an edge $e$ from a random graph $G_{s_t} \in \mathcal G \setminus \{G_p\}$, we will check that $e$ does not belong to any subgraph of $\mathcal G$ appearing before $G_{s_t}$, which will guarantee that $\phi(e)$ is determined by the random color of $e$ in $G_{s_t}$.
For each value $j \in [n-1]$, we write $E_j$ for the subset of edges of $G_p$ with color $j$.

The technique of constructing $G'$ as the union of multiple random graphs is known as \textit{multiple exposure}. The reason for using this technique is that at certain points in our argument, we need to estimate the probability that $G'$ has an edge within a specific subset of $V(G) \choose 2$. However, if we consider a certain subset of $V(G)\choose 2$ multiple times in our argument, we cannot be sure that the probabilities we estimate in a later part of our argument are independent of probabilities that we have estimated in earlier parts of our argument. Therefore, it is often convenient to use a ``fresh" random set of edges that is independent of all previous choices, so that we can guarantee that the probabilities we estimate are independent of all previously measured probabilities. A simpler form of multiple exposure is used by Fernandez de la Vega \cite{deLaVega} (explained in \cite[Chapter 8]{Bollobas}) to find
a long path in a random graph.

We will use our graph family $\mathcal G$ as follows. First, we will show that
we can a.a.s.~find a rainbow forest $F_0$ in $G_p$ with at least $n - \log^3 n$ edges.
Next, we would ideally like to use our sparse graphs $G_{s_t}$ to add edges with colors of $[n-1] \setminus \phi(E(F_0))$ between components of $F_0$ in order to extend $F_0$ into a rainbow spanning tree in $G'$. However, if we attempt to do this directly, we will likely find that our families $G_{s_t}$ are too sparse and that we cannot find an edge in an available color between two components of $F_0$. Therefore, in order to extend $F_0$ to a rainbow spanning tree,
we will need to use the idea of \emph{replacing} edges in $F_0$, which we define as follows.

\begin{definition}
\label{defReplace}
Let $F$ be a forest. We say that an edge $r \in E(F)$ is \textit{replaceable} by an edge $e \in {{V(F)} \choose 2} \setminus E(F)$ if $F + e$ has a cycle containing $r$. If $S \subseteq {{V(F)} \choose 2} \setminus E(F)$, then we say that $r \in E(F)$ is replaceable by $S$ if $r$ is replaceable by some edge of $S$.
\end{definition}

We give an informal explanation of how the idea of replacing edges in $F_0$ might help us extend $F_0$ to a rainbow spanning tree in $G'$. Suppose that $F_0$ has two components, and the only color missing from $F_0$ is red. We would like to find a red edge $e^*$ in one of our graphs $G_{s_t}$ that connects the two components of $F_0$ and add $e^*$ to $F_0$ in order to extend $F_0$ to a rainbow spanning tree. However, it is possible that no such red edge $e^*$ exists. 
In this situation, we may instead consider an arbitrary
red edge $e$ in $G_p$, which a.a.s.~must exist. If $e$ joins the two components of $F_0$, then we have our rainbow spanning tree; otherwise, $F_0 + e$ contains a cycle $C$. Then, we may modify $F_0$ by adding $e$ and removing some other edge $r \in E(C)$ of a different color, say yellow. Now, we may look for a yellow edge in one of our graphs $G_{s_t}$ that connects the two components of $F_0$. If no such yellow edge exists, then we can repeat the process above, replacing an edge of $F_0$ with a yellow edge in $G_p$ and searching for edges in some $G_{s_t}$ of a third color that connect the components of $F_0$. The key idea is that if we repeat this process for enough colors, then a graph $G_{s_t}$ must eventually have an edge connecting the components of $F_0$ in a desired color. We will also be able to apply this idea when considering a forest with more than two components.

\subsection{Some properties of $G_p$}
We will establish some properties of our graph $G_p$ that we a.a.s.~can expect will hold. After establishing these properties, we will be able to make certain deterministic statements about $G_p$ without invoking probability.  
\begin{lemma}
\label{lemmaBigF}
$G_p$ a.a.s.~contains a rainbow forest with at least $n - \log^3 n$ edges.
\end{lemma}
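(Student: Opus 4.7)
The plan is to reduce the existence of a large rainbow forest to a deficit condition via matroid intersection, and then to verify that condition by a union bound. First, I will show by a first-moment argument that a.a.s.\ every color in $[n-1]$ appears in $G_p$. For each color $c \in [n-1]$,
\[
\Pr[E_c = \emptyset] = (1 - p/(n-1))^{|E(G)|} \leq \exp\bigl(-pnd/(2(n-1))\bigr) \leq n^{-(1 + \epsilon/4)},
\]
so the expected number of missing colors is at most $n^{-\epsilon/4} = o(1)$, and Markov's inequality suffices.

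Next, conditional on every color appearing, I will apply Edmonds' matroid intersection theorem to the graphic matroid on $E(G_p)$ and the partition matroid whose blocks are the color classes $(E_c)_{c \in [n-1]}$. Writing $E_S := \bigcup_{c \in S} E_c$ and letting $c(V, E_S)$ denote the number of components of the spanning subgraph $(V(G), E_S)$, this yields that the maximum size of a rainbow forest in $G_p$ equals
\[
(2n-1) - \max_{S \subseteq [n-1]} \bigl( c(V, E_S) + |S| \bigr).
\]
So the lemma reduces to showing that a.a.s., for every $S \subseteq [n-1]$, $c(V, E_S) + |S| \leq n - 1 + \log^3 n$.

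Writing $k = |S|$, note that $E_S$ is distributed as a random subgraph of $G$ in which each edge is independently retained with probability $pk/(n-1)$. For $k \leq \log^3 n$ the inequality is trivial. For larger $k$, a violation would produce a vertex subset $U$ of size $s \leq n/(n - k + \log^3 n)$ whose $G$-boundary $\partial U$ contains no edge of $E_S$, an event of probability at most $(1 - pk/(n-1))^{|\partial U|}$. Using the isoperimetric bounds $|\partial U| \geq \max(\lambda, s(d - s + 1))$ coming from the edge-connectivity and $d$-regularity of $G$, together with $\lambda = \Omega(n)$ and $d = \Omega(n)$, I would bound the expected number of bad pairs $(S, U)$ and show it is $o(1)$ after summing over $s$ and $k$. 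The main obstacle is the union bound over the $2^{n-1}$ subsets $S$: the per-$S$ failure probability must decay quickly enough to absorb this exponential count. The slack $\log^3 n$ in the deficit is tuned so that for small $k$ the witness-$U$ argument (via $|\partial U| \geq sd/2$ for $s \leq d/2$) gives a sufficiently strong bound, while for large $k$ the allowed component count $n - 1 - k + \log^3 n$ is loose enough to be satisfied essentially automatically.
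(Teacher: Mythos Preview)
Your reduction via Edmonds' matroid intersection is correct and natural: the maximum rainbow forest in $G_p$ has size $(2n-1)-\max_{S\subseteq[n-1]}\bigl(c(V,E_S)+|S|\bigr)$, so it suffices to show $c(V,E_S)+|S|\le n-1+\log^3 n$ for all $S$. The difficulty you flag---the union bound over $2^{n-1}$ subsets $S$---is real, and your sketch does not overcome it. Take $k=|S|=n/2$. Your witness bound gives $s\le 2$, and for a singleton $U=\{v\}$ one has $\Pr[\partial_G U\cap E_S=\emptyset]=(1-pk/(n-1))^{d}\approx n^{-(1+\epsilon/4)}$. Even after noting that a violation forces $\Theta(\log^3 n)$ vertices to be simultaneously isolated in $(V,E_S)$, the resulting per-$S$ failure probability is only $\exp(-\Theta(\log^4 n))$, which cannot absorb the $\binom{n-1}{n/2}\approx 2^{n}$ choices of $S$. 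So the first-moment count $\sum_{S,U}\Pr[\cdot]$ diverges in the middle range of $k$, and the argument as written has a genuine gap. (One can try to recast the bad event in terms of partitions rather than color sets, but that route is not the one you describe and is itself nontrivial.)

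The paper sidesteps this obstacle entirely with a direct construction. It builds a random oriented version $H'$ of $G_p$ in which each potential arc of each color appears independently; forms an auxiliary bipartite graph $B$ between colors $[n]$ and vertices $V(G)$, with $j\sim v$ whenever $H'$ has an arc of color $j$ out of $v$; and applies the Erd\H{o}s--R\'enyi threshold for bipartite perfect matchings to obtain a rainbow functional digraph $D$ (every vertex but one has out-degree~$1$, each color used once). Since out-neighbors in $D$ are uniform, the expected number of cycles in $D$ is $O(\log n)$; deleting one edge per cycle and then cleaning up the $O(\log^2 n)$ parallel-arc collisions yields a rainbow forest with at least $n-\log^3 n$ edges. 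This approach never confronts an exponential union bound: the single global object is the matching in $B$, and the polylogarithmic deficit comes from cycle and collision counts controlled by Markov's inequality.
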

\begin{proof}
Rather than working directly with $G_p$, we construct a random directed graph $H$ on $V(G)$ as follows.
First, for each edge $uv \in E(G)$ and each color $i \in [n-1]$, we add an arc $uv$ of color $i$ to $H$ with probability $q = \frac{p}{2n}$, and we also add an arc $vu$ of color $i$ to $H$ with probability $q$. We call the resulting digraph $H'$, and we note that $H'$ may have parallel arcs or directed $2$-cycles. We obtain $H$ from $H'$ by deleting any edge contained in a set of parallel arcs or in a directed $2$-cycle. It is easy to show that for each $uv \in E(G)$, some arc joins $u$ and $v$ in $H$ with probability at most $2nq = p$, and the probability distribution on the color of this arc is uniform. Therefore, if we can show that the lemma holds for 
$H$
after removing edge orientations, then this will also imply that the lemma holds for $G_p$.

We construct an auxiliary bipartite graph $B$. For one partite set of $B$, we use the set $[n]$, and for the other partite set of $B$, we use the set $V(G)$. For a value $j \in [n-1]$ and a vertex $v \in V(G)$, we add an edge between $j$ and $v$ if and only if there exists an arc in $H'$ of color $j$ outgoing from $v$. Then, for each vertex $v \in V(G)$, we add an edge in $B$ between $v$ and the integer $n$ (with probability $1$).

We claim that each potential edge of $V(G) \times [n]$ is added to $B$ independently with probability at least $q^*= 1 - (1 - q)^d$.
 Indeed, let $j \in [n]$, and let $v \in V(G)$.
 If $j = n$, then the statement is clear. Otherwise, in $G$, there are $d$ edges incident to $v$, and an arc of color $j$ outgoing from $v$ is added along each edge with probability $q$. Therefore, the probability that $v$ is adjacent to $j$ in $B$ is equal to 
$1 - (1 - q)^{d} = q^*$,
and thus we see that $B$ is a randomly constructed bipartite graph. By a rough estimate using the binomial theorem, 
$$q^* > qd - q^2 d^2 = qd ( 1 - qd) 
= (1 - o(1))\frac{pd}{2n} > (1 + \alpha)
 \frac{ \log n}{n},$$
for some sufficiently small constant $\alpha > 0$.
Then, by a classical theorem of Erd\H{o}s and R\'enyi \cite{ERMatrix} (originally stated for the permanents of random matrices), $B$ a.a.s.~contains a perfect matching. 

Now, since $B$ has a perfect matching, $H'$ contains a directed rainbow subgraph $D$ in which all vertices but one
have out-degree $1$, and one vertex $w$ (the vertex matched to $n$ in $B$) has out-degree $0$. Furthermore, since $B$ is
constructed without considering the tails of the arcs in $H'$, we may consider $D$ to be constructed by randomly assigning each vertex $v \in V(G) \setminus \{w\}$ a random out-neighbor from $N_G^+(v)$.
Then, we may obtain a rainbow forest on $H'$ by deleting a single edge from each cycle of $D$.
With this in mind, we estimate the number of cycles in $D$.

 If $C$ is a fixed undirected $k$-cycle in $G$, then the probability that $C$ appears in the randomly constructed digraph $D$ is at most $ \frac{2}{d^k} $. Since the number of undirected cycles of length $k \geq 3$ in $G$ is at most $\frac{nd^{k-1}}{2k}$, we estimate the expected number of cycles in $D$ as follows:
\begin{eqnarray*}
\E(\textrm{number of cycles in $D$}) & \leq & \sum_{k = 3}^{n-1} \frac{nd^{k-1}}{2k} \cdot \frac{2}{d^k} \\
 & = & \sum_{k = 3}^{n-1} \frac{n}{dk}\\
 &=& O(\log n)
\end{eqnarray*}
Therefore, by Markov's inequality (Theorem \ref{thmMarkov}), it a.a.s.~holds that $D$ has at most $\log^2 n - 1 $ cycles. Hence, if we delete an edge from each cycle of $D$, we obtain a rainbow forest $F'$ in $H'$ containing at least $n - \log^2 n$ edges.

Finally, to obtain our rainbow forest in $H$, we need to delete from $F'$ any arcs that were deleted from $H'$ to form $H$. An arc joining two vertices $u$ and $v$ in $H'$ is deleted if and only if $H$ contains two parallel edges joining $u$ and $v$ (when ignoring arc direction); therefore, the number of arcs deleted from $H'$ to form $H$ is at most twice the number of parallel edge pairs in $H'$. The probability of two vertices $u$ and $v$ being joined in $H'$ by a pair of parallel edges of color $i$ and $j$ is less than $p^2/n^2$, 
which implies that the expected number of parallel edge pairs in $H'$ is less than $p^2 n^2 = O(\log^2 n)$. Hence, by Markov's inequality, we may assume a.a.s.~that the number of parallel edge pairs in $H'$ is less than $\frac{1}{3}\log^3 n$. Therefore, at most $ \frac{2}{3} \log^3 n$ edges are deleted from $H'$ to form $H$, and hence we may delete at most $ \frac{2}{3} \log^3 n$ edges from $F'$ in order to find a rainbow forest on $H$ with at least $n - \log^2 n - \frac{2}{3} \log^3 n - 1 > n - \log^3 n$ edges. This completes the proof.
\end{proof}

The next lemma estimates the number of vertices in $G_p$ incident to an edge from a given color set $K \subseteq [n-1]$. Given a color set $K$, we expect $G'$ to contain a set $E_K$ of $|E(G)|\frac{p |K|}{n-1} > \frac{1}{2}dp|K| > |K| \log n$ edges of a color from $K$.
If $|K| \log n = o(n)$, then we expect $G'[E_K]$ to be sparse, so we also expect $\Omega ( |K| \log n )$ vertices of $G_p$ to be incident to an edge of a color from $K$. The following lemma shows us that for all sets $K \subseteq [n-1]$ that are not too large, this estimate is not far from the truth.

\begin{lemma}
Let $\omega(n)$ be an unbounded increasing function. It a.a.s.~holds that for each color set $K \subseteq [n-1]$ satisfying $| K| \leq \frac{n}{ \omega(n) \log n}$, at least $| K | \cdot \frac{\log n}{\omega(n)}$ vertices $v \in V(G_p)$ are incident in $G_p$ to at least one edge of a color from $K$.
\label{lemmaHitV}
\end{lemma}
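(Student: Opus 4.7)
The plan is to fix a color set $K\subseteq[n-1]$ with $|K|=k\leq n/(\omega(n)\log n)$, let $m:=k\log n/\omega(n)$, and bound the probability of the bad event ``fewer than $m$ vertices of $V(G)$ are incident in $G_p$ to a K-colored edge''; then union bound over $K$. Since each edge of $G$ is kept in $G_p$ with probability $p$ and then assigned a uniformly random color, each edge of $G$ is K-colored in $G_p$ independently with probability $q:=pk/(n-1)$, which reduces the analysis to a product probability space on $E(G)$.

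If fewer than $m$ vertices are incident to a K-colored edge of $G_p$, then writing $T:=V(G)\setminus V^*$ for $V^*$ the set of incident vertices, $T$ has size at least $n-m+1$ and no edge of $G$ incident to $T$ is K-colored in $G_p$. By $d$-regularity and a handshake count, $T$ has at least $d|T|/2\geq d(n-m+1)/2$ incident edges in $G$, each independently K-colored with probability $q$. Substituting $p=(2+\epsilon/2)\log n/d$, the probability that none of these is K-colored is at most $\exp(-qd(n-m+1)/2)=n^{-(1+\epsilon/4+o(1))k}$ for $m=o(n)$.

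A two-stage union bound over the $\binom{n}{m-1}$ choices of $T$ and the $\binom{n-1}{k}$ choices of $K$, together with summation over $k\geq 1$, gives a total failure probability bounded by $\sum_k \binom{n-1}{k}\binom{n}{m-1}\,n^{-(1+\epsilon/4+o(1))k}$. Using $\log\binom{n}{a}\leq a\log(en/a)$ and $m=k\log n/\omega(n)$, the log of a typical term is bounded by $k\log(en/k)+m\log(en/m)-(1+\epsilon/4)k\log n$. The key estimate is that the combinatorial overhead $m\log(en/m)\leq 2k\log^2 n/\omega(n)$ is dominated by $(\epsilon/8)k\log n$ once $\omega(n)$ grows fast enough, leaving a net exponent of at most $-(\epsilon/16)k\log n$ per term; summing the resulting geometric series over $k\geq 1$ yields $o(1)$.

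The main obstacle is verifying that the combinatorial overhead $m\log(en/m)$ is defeated by the probability savings uniformly in $k$; the naive estimate above is clean when $\omega(n)$ grows at least as fast as $\log n$, but for slower $\omega$ a case split is needed. When $k\log n\ll \omega(n)^2$ the bad event deterministically forces the Binomial random variable $|E_K|$ to be at most $\binom{m-1}{2}$, which is much smaller than its mean $|E(G)|q\approx(1+\epsilon/4)k\log n$, and the sharp lower-tail Chernoff bound $P(|E_K|\leq a)\leq(e\mu/a)^a e^{-\mu}$ applied to $|E_K|$ yields the needed $n^{-(1+\alpha)k}$ with $\alpha>0$. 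In the complementary range the quantity $m$ is large enough that $\log(en/m)$ is comparable to $2\log\omega(n)$ rather than $\log n$, and the naive union bound above suffices. Balancing these two regimes is the delicate calculation that makes the proof go through for arbitrary unbounded $\omega(n)$.
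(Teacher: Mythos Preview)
Your plan has a genuine gap in the case analysis. Take $k=1$ and $\omega(n)=\log\log\log n$ (which is exactly the choice the paper later uses). Then $m=\log n/\omega(n)$, so $\log(en/m)=\log n - \log\log n + \log\omega(n) + 1\sim\log n$, and the overhead $m\log(en/m)\sim \log^2 n/\omega(n)$ swamps the savings $(\epsilon/4)k\log n=(\epsilon/4)\log n$. You place this case in the ``complementary range'' $k\log n\gtrsim\omega(n)^2$ (indeed $\log n\gg(\log\log\log n)^2$), but your claim that there $\log(en/m)$ is comparable to $2\log\omega(n)$ is false: that only happens when $m$ is near its maximum $n/\omega(n)^2$, i.e.\ when $k$ is near $n/(\omega(n)\log n)$, not throughout the range $k\log n\gtrsim\omega(n)^2$. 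Your other branch does not rescue this case either, since $\binom{m-1}{2}\sim \log^2 n/(2\omega(n)^2)\gg \mu\sim(1+\epsilon/4)\log n$, so ``$|E_K|\le\binom{m-1}{2}$'' is not even a lower-tail event. Thus for bounded $k$ and slowly growing $\omega$ neither half of your split yields the required $n^{-(1+\alpha)k}$ bound.

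The paper avoids the union bound over vertex sets altogether by a decoupling trick: fix an orientation of $G$ with all in-degrees equal to $\lfloor d/2\rfloor$ or $\lceil d/2\rceil$, and for each $v$ let $Z_v$ indicate that some $K$-coloured edge is directed \emph{into} $v$. Since the in-edge sets of distinct vertices are disjoint, the $Z_v$ are mutually independent, and $\sum_v Z_v$ is a lower bound on the number of $K$-incident vertices. One then applies the Chernoff bound directly to $\sum_v Z_v$ with mean $\mu\ge(1+\epsilon/8)k\log n$ and $\delta=1-1/\omega(n)$, obtaining $\Pr(\sum Z_v<k\log n/\omega(n))\le\exp(-(1+\epsilon/16)k\log n)$; the only remaining union bound is over the $\binom{n-1}{k}\le n^k$ colour sets, which is absorbed by the extra $\epsilon/16$. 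This orientation idea is what makes the lemma go through for \emph{every} unbounded $\omega$, and it is missing from your proposal.
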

\begin{proof}

Let $K \subseteq [n-1]$ be fixed, and let $|K| = k \leq \frac{n}{\omega(n) \log n}$. We consider the random subgraph $H$ of $G_p$ consisting of those edges of $E(G_p)$ with a color in $K$.
We note that $H$ is a random subgraph of $G$ obtained by keeping each edge $e \in E(G)$ with probability $\frac{kp}{n-1}$ and then coloring $e$ uniformly at random from the set $K$.

We orient the edges of $H$ as follows. First, we give $E(G)$ an orientation in which the in-degree of each vertex $v$ is either $\lfloor d/2 \rfloor$ or $\lceil d/2 \rceil$. This is possible when $d$ is even by the Eulerian property of $G$, and this is possible when $d$ is odd by adding a universal vertex to $G$ to obtain an Eulerian graph. Then, we let each edge $e \in E(H)$ inhert its orientation from $G$.

We would like to estimate the number of vertices in $H$ with positive in-degree. 
Since $H$ is a random subgraph of $G$, the probability that a vertex $v \in V(H)$ has in-degree $0$ is
is $(1 - \frac{pk}{n-1})^{\deg^-_G(v)}$. Therefore, the expected number $\mu$ of vertices in $H$ with positive in-degree satisfies
\[\mu =  \sum_{v \in V(G)}( 1 - (1 -\frac{pk}{n-1})^{\deg^-_G(v)})
\geq 
n( 1 - (1 -\frac{pk}{n-1})^{\lfloor d/2 \rfloor }) 
\geq  \lfloor \frac{d}{2} \rfloor pk - \frac{(dpk)^2}{n},
\]
where the last inequality follows from a generous estimate using the binomial theorem.
By substituting $p = \frac{(2 + \frac{1}{2} \epsilon) \log n}{d}$ and applying our upper bound on $k$, we see that 
\[\mu \geq \left (1 + \frac{1}{4} \epsilon - o(1) \right ) k \log n  > \left (1 + \frac{1}{8}\epsilon \right ) k \log n.\]

We would like to estimate the probability that $Y \geq \frac{k \log n}{\omega(n)}$. As $Y$ is the sum of pairwise-independent indicator variables, a Chernoff bound (Theorem \ref{thmChernoff}) may be applied to $\mu$, implying
$$ \Pr(Y < (1 - \delta) \mu) \leq \left (  \frac{e^{-\delta}}{(1-\delta)^{1-\delta}} \right )^{\mu} = \left (  \frac{e}{(1-\delta)^{\frac{\delta - 1}{\delta}}} \right )^{- \delta \mu}.$$
We let $\delta = 1 -\frac{1}{\omega(n)}$; then, since 
$\lim_{n \rightarrow \infty} (1-\delta)^{\frac{\delta - 1}{\delta}} = 1,$
we have that
\[
\Pr \left (Y <\frac{k \log n}{\omega(n)} \right )  = \Pr(Y < (1 - \delta) \mu) \leq \exp \left (-(1 - o(1))\delta \mu \right ) < \exp \left ( - \left (1 + \frac{1}{16}\epsilon \right )  k   \log n  \right ). \]

Therefore, summing over all valid subsets $K \subseteq [n-1]$, the probability that the lemma does not hold for some subset $K \subseteq [n-1]$ of appropriate size is less than
\begin{eqnarray*}
& &\sum_{k = 1}^{\infty} {{n-1} \choose k}\exp \left ( - \left (1 + \frac{1}{16}\epsilon \right )  k \log n \right )  \\
&<& \sum_{k = 1}^{\infty} \exp \left (k \log n - \left (1 + \frac{1}{16}\epsilon \right ) k \log n \right )  \\
&= &\sum_{k = 1}^{\infty} \exp \left (-\frac{1}{16}\epsilon k \log n \right ) = \frac{\exp(-\frac{1}{16}\epsilon \log n)}{1 - \exp(-\frac{1}{16}\epsilon\log n)} = o(1). \\
\end{eqnarray*}

Therefore, a.a.s., the lemma holds for all color subsets $K \subseteq [n-1]$ of appropriate size.
\end{proof}

\subsection{Growing a rainbow spanning tree in $G'$: edge replacement}
\label{secIt}
Now we will begin to construct our rainbow spanning tree in $G'$. 
Recall that for an edge $e$ in a graph $G_{s_t} \in \mathcal G \setminus \{G_p\}$,
$\phi(e)$ may not be determined by
the color of $e$ in $G_{s_t}$, namely when some other graph of $\mathcal G$ contains $e$ and appears before $G_{s_t}$ in the ordering 
$G_p,G_{s_{\lfloor \log^3 n \rfloor - 1}},\dots, G_{s_2} $.
In order to avoid problems that may arise from this detail as we construct our rainbow spanning tree in $G'$, whenever we consider a colored edge $e$ in such a graph
$G_{s_t} \in \mathcal G \setminus \{G_p\}$,
we will first check that $e$ does not belong to any graph appearing before $G_{s_t}$ in our ordering of $\mathcal G$.

We will start with a rainbow forest $F_0$ on $G_p$.
By Lemma \ref{lemmaBigF}, we may choose $F_0$ to contain at least $n - \log^3 n$ edges. As explained in Section \ref{secStrat}, we wish to extend our rainbow forest by taking edges from our sparse families $G_{s_t}$ and adding them to $F_0$. In order to ensure that we can add edges of appropriate colors, we will often need to replace edges in our rainbow forest. In the following lemma, we use an argument involving edge replacement to show that
if we begin with a rainbow forest $F^*$ in $G_p$ that is maximal in some sense,
then for any color $\sigma \in [n-1] \setminus \phi(E(F^*))$ (that is, a color $\sigma$ ``missed" by $F^*$), we can find many colors $j$ for which there exists a rainbow forest $F$ in $G_p \cup E(F^*)$ with color set $\phi(E(F^*)) \cup \{\sigma\} \setminus \{j\}$---that is, such that $F$ ``misses" the color $j$.

\begin{lemma}
Let $F^*$ be a rainbow forest on $G'$ with at most $n-2$ edges, and let $\sigma \in [n-1] \setminus \phi(E(F^*))$. Let $J \subseteq \phi(E(F^*)) \cup \{\sigma\}$ denote the set of values $j \in [n-1]$ for which there exists a rainbow forest $F$ on $G_p \cup E(F^*)$ satisfying the following properties:
\begin{itemize}
\item The components of $F$ partition $V(G)$ in the same way as the components of $F^*$.
\item $\phi(E(F)) = \phi(E(F^*)) \cup \{\sigma\} \setminus \{j\}$.
\end{itemize}
If there exists no rainbow forest in $G_p \cup E(F^*)$ with $|E(F^*)| + 1$ edges, then
$$|J| \geq   \frac{n}{  \log \log n} .$$
\label{lemmaMany}
\end{lemma}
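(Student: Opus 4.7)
The plan is by contradiction: assume $|J| < n/\log\log n$ and derive a contradiction with the hypothesis. Set $K := J \cup \{\sigma\}$; note that $\sigma \in J$ is immediate, with $F = F^*$ as witness, so $|K| = |J| < n/\log\log n$. The cornerstone of the argument is a \emph{closure property} of $K$. For each $c \in K$, fix a witness $F^{(c)}$, and consider any $c$-colored edge $e$ of $G_p$. I first claim that both endpoints of $e$ lie in the same component of $F^*$; otherwise, since $c \notin \phi(E(F^{(c)}))$, the forest $F^{(c)} + e$ would be a rainbow forest on $G_p \cup E(F^*)$ of size $|E(F^*)|+1$, violating the hypothesis. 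Consequently $F^{(c)} + e$ contains a unique cycle $C_e$, and I next claim that every edge of $C_e$ has its color in $K$. This follows by edge exchange: for any $r \in E(C_e) \setminus \{e\}$, the forest $F^{(c)} + e - r$ has the same components as $F^*$ and color set $\phi(E(F^*)) \cup \{\sigma\} \setminus \{\phi(r)\}$, so by the definition of $J$ we have $\phi(r) \in J \subseteq K$.

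Specializing this to $c = \sigma$ (so $F^{(\sigma)} = F^*$) yields a sharp structural consequence: every $\sigma$-edge $e$ of $G_p$ has both endpoints in the same component of the $J$-colored subforest $F^*_J \subseteq F^*$, since the non-$e$ edges of $C_e$ all lie in $F^*_J$. Because $|E(F^*_J)| = |J| - 1 < n/\log\log n$, the set $V_J$ of vertices incident to some edge of $F^*_J$ has $|V_J| \leq 2(|J|-1) < 2n/\log\log n$, and every $\sigma$-edge of $G_p$ has both endpoints in $V_J$.

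To extract the contradiction, I would iterate the closure property via a BFS on colors. Starting from $J_0 = \{\sigma\}$, at step $i$ I enlarge $J_i$ to $J_{i+1}$ by adjoining all colors revealed on cycles $C_e$ as $c$ ranges over $J_i$ and $e$ over the $c$-colored edges of $G_p$. Lemma \ref{lemmaHitV}, applied with $\omega(n) = \log \log n$ whenever $|J_i|$ lies in its range of applicability, lower-bounds the number of vertices of $G_p$ incident to $J_i$-colored edges, which (combined with the closure) forces each cycle to contribute edges colored in $K$; a careful accounting shows the contributions to $J_{i+1}$ grow substantially over $J_i$ at each step. Once $|J_i|$ grows past the applicability threshold of Lemma \ref{lemmaHitV}, I would combine the closure with Lemmas \ref{lem:cuts} and \ref{lemmaStraddle} applied to $F^*_J$ (which has $\Omega(n)$ components under the standing assumption) to exhibit a $G$-edge crossing $F^*_J$-components that must land in $G_p$ with a color in $K$, violating the structural bound on $V_J$.

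The main obstacle is quantifying the BFS growth cleanly and bridging two scales. Lemma \ref{lemmaHitV} applies only while $|K| = o(n/\log n)$, whereas the target $n/\log\log n$ is much larger, so the argument must either establish a sufficient multiplicative growth per BFS step within the applicable range and then hand off to the structural Lemmas \ref{lem:cuts}, \ref{lemmaStraddle}, or else obtain the contradiction entirely via the $V_J$-containment by comparing it against the expected number of $\sigma$-edges (roughly $(1 + \epsilon/4)\log n$) that $G_p$ scatters across the whole of $V(G)$. Ensuring that each BFS step genuinely contributes new colors (rather than rediscovering colors already in $J_i$) is the delicate combinatorial point on which the quantitative bound ultimately rests.
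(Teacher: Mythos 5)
Your closure idea is indeed the core mechanism: if no $|E(F^*)|+1$-edge rainbow forest exists, every $c$-colored edge of $G_p$ closes a cycle in a witness forest $F^{(c)}$, and exchanging any cycle edge keeps you inside $J$ --- that much is sound and is the same engine the paper uses. But the proposal stops short of the actual proof in three places where the difficulties are genuine, not just routine.

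First, the ``careful accounting'' of how $J_i$ grows is precisely the hard part, and your fixed-witness formulation undercuts it. Each cycle $C_e$ lies in $F^{(c)}$, not in $F^*$; different colors in $J$ can be carried by different edges across different witnesses $F^{(c)}$, and a fixed witness $F^{(c)}$ may differ from $F^*$ by arbitrarily many edges. Consequently the union of replaceable edges $\bigcup_c R_{(F^{(c)},c)}$ is only guaranteed to cover $V\bigl(\bigcup_c E_c\bigr)$ in \emph{edges}, not in \emph{distinct colors}. A naive count gives only $|J|^2 \gtrsim |V(A)|$, which combined with Lemma \ref{lemmaHitV} yields $|J|\gtrsim \log n/\omega(n)$, far short of $n/\log\log n$. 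The paper evades this by building a replacement \emph{tree} rooted at $F^*$ in which a forest at depth $h$ differs from $F^*$ by at most $h$ edges, so that across all nodes the total number of non-$F^*$ edges is at most the number of nodes $Q$. This bound is what lets one subtract $2Q$ from the edge-cover size and still retain a multiplicative growth factor of roughly $(\log n)^{1-o(1)}$ per level. Your BFS over colors with arbitrary witnesses does not carry this control, and you would need to rebuild it (essentially reinventing the tree).

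Second, the structural consequence about $V_J$ is genuinely weaker than you need. You correctly deduce that all $\sigma$-edges of $G_p$ lie inside $F^*_J$-components, but $|V_J|<2n/\log\log n$ does not contradict Lemma \ref{lemmaHitV} applied to $K=\{\sigma\}$ (which only promises $\approx \log n/\omega(n)$ incident vertices), and the analogous statement for other $c\in J$ does not even hold with respect to $F^*_J$, because the cycle $C_e$ sits in $F^{(c)}$, whose $J$-colored edges need not be in $F^*$ at all. So the ``$V_J$-containment'' does not iterate, and appealing to the ``expected number of $\sigma$-edges scattered across $V(G)$'' is not available: Lemma \ref{lemmaMany} is a deterministic statement conditioned on $G_p$ (the paper flags this explicitly), and the only randomness has already been cashed in via Lemma \ref{lemmaHitV}.

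Third, the proposed endgame with Lemmas \ref{lem:cuts} and \ref{lemmaStraddle} is off-target. Lemma \ref{lemmaStraddle} produces many edges of $E(G)\setminus E(\tilde G)$ crossing components of $H$; it says nothing about colors and nothing that forces such an edge to appear in $G_p$ with a color in $K$. Those two lemmas are used later in the paper to feed the sparse graphs $G_{s_t}$, not to prove Lemma \ref{lemmaMany}. The paper instead handles the scale gap by one final iteration of the same growth step (Construct $\mathcal T_f$ (II)), applying Lemma \ref{lemmaHitV} once more at the threshold scale $n/(\omega(n)\log n)$ to push the count of distinct colors up to $n/\log\log n$. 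So while your high-level identification of the edge-exchange closure is right, the quantitative heart of the argument --- the tree structure, the $Q$-bound on non-$F^*$ edges, and the two-phase growth --- is missing, and the substitute endgame you sketch would not close the gap.
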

This second condition of Lemma \ref{lemmaMany} states
that we only consider those rainbow forests $F$ for which the colors of $F$ consist of those colors used by $F^*$ and also possibly $\sigma$.
The set $J$ is then constructed by considering every such rainbow forest $F$
and adding the unique color from
$ \phi(E(F^*)) \cup \{\sigma\}$
that is missing from $E(F)$.
Then, the conclusion of the lemma states that if we cannot construct a rainbow forest with more edges than $F^*$ using just colored edges of $F^*$ and $G_p$, then this set $J$ of colors missed by the our forests $F$ must be large.
The remainder of Section \ref{secIt} will be dedicated to proving this lemma.

We note that Lemma \ref{lemmaMany}
makes a deterministic statement about
the random graph $G_p$ without any mention of probability, which is possible by assuming Lemma \ref{lemmaHitV}.
We will later apply Lemma \ref{lemmaMany} first with $F^* = F_0$, and afterward we will apply the lemma to other rainbow forests $F^*$. 
\begin{proof}[Proof of Lemma \ref{lemmaMany}]
We fix our rainbow forest $F^*$, and 
we assume that there exists no rainbow forest on $G_p \cup E(F^*)$ with $|E(F^*)| + 1$ edges.
We let $\sigma \in [n-1] \setminus \phi(E(F^*))$. We define a countably infinite rooted tree $\mathcal{T}$ which will store information about rainbow forests that satisfy the properties listed in the lemma. Each node of $\mathcal{T}$ will store a pair $(F,j)$, 
where $F$ is a rainbow forest spanning $V(G)$, and $\{j\} = \phi(E(F^*)) \cup \{\sigma\} \setminus \phi(E(F))$. With this definition, $j$ is the unique color from $\phi(E(F^*)) \cup \{\sigma\}$ that is ``missed" by the edges of $F$. For a pair $(F,j)$ stored in a node $\nu$ of $\mathcal{T}$, we say that $j$ is the \textit{color} of $\nu$. If a node $\nu$ of $\mathcal T$ stores a pair $(F,j)$, then we will often identify $\nu$ and $(F,j)$.

We construct $\mathcal{T}$ by a recursive procedure that considers each leaf of $\mathcal{T}$ and then adds children in $\mathcal T$ to that leaf according to certain rules. The procedure is carried out as follows.
\begin{algorithm}[H]
\caption{Construct $\mathcal T$}\label{alg:cap}
\begin{algorithmic}[1] 
\State Let $\mathcal{T}$ consist of a single root node $(F^*,\sigma)$. 
\While{$1=1$}
\State \label{infLoop} For each leaf $L = (F,j)$ of maximum height in $\mathcal{T}$, execute the following steps: 
\begin{enumerate}
\setlength{\itemindent}{1.2cm}
\item \multiline{Let $R$ denote the set of edges $e \in E(F)$ replaceable by the set $E_j \subseteq E(G_p)$, which \\ consists of those edges of color $j$.}
\item For each edge $r \in R$, add a child node of $L$ storing the pair $(F + e - r, \phi(r))$. 
\end{enumerate}
\EndWhile
\end{algorithmic}
\end{algorithm}

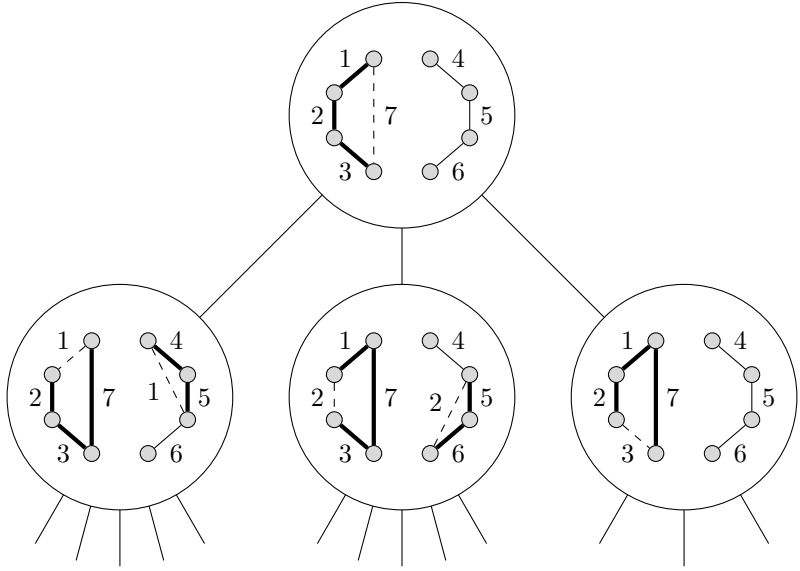
\begin{figure}
\begin{tikzpicture}
[scale=1.5,auto=left,every node/.style={circle,fill=gray!30,minimum size = 6pt,inner sep=0pt}]

\node (z) at (-0.5,5.5) [fill = white]  {$1$};
\node (z) at (-0.5,5.5-2.5) [fill = white]  {$1$};
\node (z) at (-0.5-2.5,5.5-2.5) [fill = white]  {$1$};
\node (z) at (-0.5+2.5,5.5-2.5) [fill = white]  {$1$};
\node (z) at (-0.75,5) [fill = white]  {$2$};
\node (z) at (-0.75,5-2.5) [fill = white]  {$2$};
\node (z) at (-0.75-2.5,5-2.5) [fill = white]  {$2$};
\node (z) at (-0.75+2.5,5-2.5) [fill = white]  {$2$};
\node (z) at (-0.5,4.5) [fill = white]  {$3$};
\node (z) at (-0.5,4.5-2.5) [fill = white]  {$3$};
\node (z) at (-0.5-2.5,4.5-2.5) [fill = white]  {$3$};
\node (z) at (-0.5+2.5,4.5-2.5) [fill = white]  {$3$};
\node (z) at (-0.5,4.5) [fill = white]  {$3$};
\node (z) at (-0.5,4.5) [fill = white]  {$3$};

\node (z) at (0.5,5.5) [fill = white]  {$4$};
\node (z) at (0.5,3) [fill = white]  {$4$};
\node (z) at (3,3) [fill = white]  {$4$};
\node (z) at (-2,3) [fill = white]  {$4$};

\node (z) at (0.75,5) [fill = white]  {$5$};
\node (z) at (0.75,2.5) [fill = white]  {$5$};
\node (z) at (0.75-2.5,2.5) [fill = white]  {$5$};
\node (z) at (0.75+2.5,2.5) [fill = white]  {$5$};

\node (z) at (0.5,4.5) [fill = white]  {$6$};
\node (z) at (0.5,2) [fill = white]  {$6$};
\node (z) at (0.5-2.5,2) [fill = white]  {$6$};
\node (z) at (0.5+2.5,2) [fill = white]  {$6$};

\node (z) at (-0.1,5) [fill = white]  {$7$};
\node (z) at (-0.1,2.5) [fill = white]  {$7$};
\node (z) at (-0.1-2.5,2.5) [fill = white]  {$7$};
\node (z) at (-0.1+2.5,2.5) [fill = white]  {$7$};

\node (z) at (0.75-2.95,2.55) [fill = white]  {$1$};

\node (z) at (0.75-0.45,2.45) [fill = white]  {$2$};

\draw (0,5) circle [thick, radius=1];
\node (t0) at (-0.25,5.5) [draw = black] {};
\node (t1) at (-0.6,5.2) [draw = black] {};
\node (t2) at (-0.6,4.8) [draw = black]  {};
\node (t3) at (-0.25,4.5) [draw = black]  {};
\node (t0r) at (0.25,5.5) [draw = black] {};
\node (t1r) at (0.6,5.2) [draw = black] {};
\node (t2r) at (0.6
,4.8) [draw = black]  {};
\node (t3r) at (0.25,4.5) [draw = black]  {};

\draw (0,2.5) circle [thick, radius=1];
\node (u0) at (-0.25,5.5-2.5) [draw = black] {};
\node (u1) at (-0.6,5.2-2.5) [draw = black] {};
\node (u2) at (-0.6,4.8-2.5) [draw = black]  {};
\node (u3) at (-0.25,4.5-2.5) [draw = black]  {};
\node (u0r) at (0.25,5.5-2.5) [draw = black] {};
\node (u1r) at (0.6,5.2-2.5) [draw = black] {};
\node (u2r) at (0.6
,4.8-2.5) [draw = black]  {};
\node (u3r) at (0.25,4.5-2.5) [draw = black]  {};

\draw (0-2.5,2.5) circle [thick, radius=1];
\node (v0) at (-0.25-2.5,5.5-2.5) [draw = black] {};
\node (v1) at (-0.6-2.5,5.2-2.5) [draw = black] {};
\node (v2) at (-0.6-2.5,4.8-2.5) [draw = black]  {};
\node (v3) at (-0.25-2.5,4.5-2.5) [draw = black]  {};
\node (v0r) at (0.25-2.5,5.5-2.5) [draw = black] {};
\node (v1r) at (0.6-2.5,5.2-2.5) [draw = black] {};
\node (v2r) at (0.6-2.5
,4.8-2.5) [draw = black]  {};
\node (v3r) at (0.25-2.5,4.5-2.5) [draw = black]  {};

\draw (0+2.5,2.5) circle [thick, radius=1];
\node (w0) at (-0.25+2.5,5.5-2.5) [draw = black] {};
\node (w1) at (-0.6+2.5,5.2-2.5) [draw = black] {};
\node (w2) at (-0.6+2.5,4.8-2.5) [draw = black]  {};
\node (w3) at (-0.25+2.5,4.5-2.5) [draw = black]  {};
\node (w0r) at (0.25+2.5,5.5-2.5) [draw = black] {};
\node (w1r) at (0.6+2.5,5.2-2.5) [draw = black] {};
\node (w2r) at (0.6+2.5
,4.8-2.5) [draw = black]  {};
\node (w3r) at (0.25+2.5,4.5-2.5) [draw = black]  {};



 \draw[dashed] (t0) edge  (t3);
 \draw[dashed] (u1) edge  (u2);
 \draw[dashed] (u1r) edge  (u3r);

 \draw[dashed] (v0) edge  (v1);
  \draw[dashed] (v0r) edge  (v2r);
 \draw[dashed] (w1) edge  (w2);
 \draw[dashed] (u2) edge  (u3);
  \draw[dashed] (w2) edge  (w3);

 \draw [ultra thick] (w1) edge  (w0);
 \draw [ultra thick] (w2) edge  (w1);
 \draw [ultra thick] (u2) edge  (u3);
 \draw [ultra thick] (u2r) edge  (u3r);
 \draw [ultra thick] (u1r) edge  (u2r);
 \draw [ultra thick] (u1) edge  (u0);
  \draw [ultra thick] (v1) edge  (v2);
    \draw [ultra thick] (v0) edge  (v3);
      \draw [ultra thick] (u0) edge  (u3);
 
 \draw[ultra thick] (t1) edge  (t0);
 \draw[ultra thick] (t1) edge  (t2);
 \draw[ultra thick] (t2) edge  (t3);
  \draw[ultra thick] (w0) edge  (w3);

   \draw[ultra thick] (v0r) edge  (v1r);
   \draw[ultra thick] (v2r) edge  (v1r);
 \draw[ultra thick] (v2) edge  (v3);

\draw (0,4) -- (0,3.5);
\draw (-0.7071,5 -0.7071) -- (-2.5+0.7071,2.5+0.7071);
\draw (0.7071,5 -0.7071) -- (2.5-0.7071,2.5+0.7071);

\draw (0-0.5,2.5-0.866) -- (0-0.5*1.5,2.5-0.866*1.5);
\draw (0+0.5,2.5-0.866) -- (0+0.5*1.5,2.5-0.866*1.5);
\draw (0,1.5) -- (0, 1);
\draw (0-0.25882,2.5-0.96593) -- (0-0.25882*1.5,2.5-0.96593*1.5);
\draw (0+0.25882,2.5-0.96593) -- (0+0.25882*1.5,2.5-0.96593*1.5);

\draw (-2.5-0.5,2.5-0.866) -- (-2.5-0.5*1.5,2.5-0.866*1.5);
\draw (-2.5+0.5,2.5-0.866) -- (-2.5+0.5*1.5,2.5-0.866*1.5);
\draw (-2.5,1.5) -- (-2.5, 1);
\draw (-2.5-0.25882,2.5-0.96593) -- (-2.5-0.25882*1.5,2.5-0.96593*1.5);
\draw (-2.5+0.25882,2.5-0.96593) -- (-2.5+0.25882*1.5,2.5-0.96593*1.5);

\draw (2.5-0.5,2.5-0.866) -- (2.5-0.5*1.5,2.5-0.866*1.5);
\draw (2.5+0.5,2.5-0.866) -- (2.5+0.5*1.5,2.5-0.866*1.5);
\draw (2.5,1.5) -- (2.5, 1);

\foreach \from/\to in {t0/t1,t1/t2,t2/t3,t0r/t1r,t1r/t2r,t2r/t3r,
u0/u1,u2/u3,u0r/u1r,u1r/u2r,u2r/u3r,u0/u3,
v0/v3,v1/v2,v2/v3,v0r/v1r,v1r/v2r,v2r/v3r,v0/v3,
w0/w1,w1/w2,w0r/w1r,w1r/w2r,w2r/w3r,w0/w3}
    \draw (\from) -- (\to);
\end{tikzpicture} 
\caption{The figure shows the tree $\mathcal T$ as constructed when $F^*$ is a rainbow forest shown at the top of the figure, with $\phi(E(F^*)) = \{1,2,3,4,5,6\}$, and with $\sigma = 7$. The root of tree contains the pair $(F^*, 7)$. The children of the root of $\mathcal T$ are computed by considering the forests obtained after replacing an edge of $F^*$ with a new edge of color $7$. The leftmost child of the root of $\mathcal T$ is obtained by replacing the edge of color $1$ in $F^*$. This child contains a pair $(F',1)$, where $F'$ is the forest shown in the figure. The children of $(F',1)$ are computed similarly, by considering the forests obtained by replacing an edge of $F'$ by an edge of color $1$. The children of the other nodes are computed similarly. In each depicted node $\nu = (F, j)$, the dashed edges represent edges in $G_p \cup E(F^*)$ of the color $j$ missed by $F$, and the bolded edges represent edges of $F$ that may be replaced to produce children of $\nu$.
}
\label{figMoneyTree}
\end{figure}
We show an example of a tree $\mathcal T$ constructed by this process in Figure \ref{figMoneyTree}. We note that with this construction, a pair $(F,j)$ that appears once in $\mathcal T$ will appear infinitely often in $\mathcal T$. This is not a concern for us.

We argue that each pair $(F,j)$ stored by $\mathcal T$ satisfies the properties of Lemma \ref{lemmaMany}. The pair $(F^*,\sigma)$ clearly satisfies the properties of Lemma \ref{lemmaMany}. Next, suppose that some pair $(F,j)$ satisfies the properties of Lemma \ref{lemmaMany}; we show that the children of $(F,j)$ also satisfy these properties.
In more detail, let $e \in  E_j$, and let $r \in E(F)$ be replaced by $e$. We show that the pair $(F + e - r, \phi(r))$ satisfies the properties listed in the lemma.
First, since $e$ is replaceable by $r$, $F+e-r$ is a forest, and clearly the components of $F+e-r$ partitons $V(G)$ in the same way as those of $F$. Next, as $j \not \in \phi(E(F))$, $F+e-r$ is rainbow colored. Finally, $\phi(r)$ is the only color missed by $\phi(F+e-r)$. Hence, every node $(F,j) \in \mathcal T$ satisfies the properties listed in the lemma.

We would like to show that $\mathcal{T}$ stores a large number of distinct colors in its nodes, as this will ultimately allow us to show that the set $J$ defined in the lemma is large. In order to show that $\mathcal{T}$ stores a large number of distinct colors, we will compute a finite subtree $\mathcal{T}_f \subseteq \mathcal{T}$, each of whose nodes stores a distinct color, and we will show that $\mathcal{T}_f$ contains many nodes.

Throughout the proof, we write $\omega(n) = \frac{1}{3} \log \log \log n$.
We compute $\mathcal{T}_f$ in two parts. The first of these parts is the following algorithm, which will output a finite subtree $\mathcal{T}_f \subseteq \mathcal{T}$ containing at least $\frac{n}{\omega(n) \log n}$ nodes, each of whose nodes stores a distinct color. 
\begin{algorithm}[H]
\caption{Construct $\mathcal T_f$ (I)}\label{alg:cap}
\begin{algorithmic}[1]
\State Let $\mathcal T_f$ have a root $(F^*, \sigma)$.
\For{$h \geq 0$}
\State Let $\mathcal L_h$ be the set of vertices of $\mathcal{T}_f$ at height $h$.
\State \label{stepReplace} 
\multiline{For each pair $(F,j) \in \mathcal{L}_h$, add to $\mathcal{T}_f$ all children of $(F,j)$ in $\mathcal T$ whose color has not yet appeared in $\mathcal{T}_f$, adding at most one child of each color.}
\EndFor
\end{algorithmic}
\end{algorithm}

As $\mathcal T_f$ can contain at most $n - 1$ distinct colors, the Construct $\mathcal T_f$ (I) procedure must terminate. We first would like to show that $\mathcal T_f$ grows at a predictable rate and eventually contains at least $\frac{ n}{\omega(n) \log n}$ nodes. To this end, let $M$ be the maximum integer value for which all values $h < M$ satisfy $|\mathcal{L}_h |\leq \frac{ n}{\omega(n) \log n}$. If no such maximum exists, we let $M = \infty$. We define $t_{-1} = - 1 +\frac{2 \omega(n)}{\log \log n} $.
\begin{claim}
\label{claimF}
For $0 \leq h \leq M$, 
the number of elements in $\mathcal L_h$ is a value $(\log n)^{t_h}$, where $t_h \geq t_{h-1} + 1 - \frac{2\omega(n)}{\log \log n}$.
\end{claim}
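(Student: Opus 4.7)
The plan is to proceed by induction on $h$. The base case $h=0$ is immediate: $\mathcal{L}_0=\{(F^*,\sigma)\}$ has exactly one element, so $|\mathcal{L}_0|=1=(\log n)^0$, and setting $t_0=0$, the required inequality $t_0\ge t_{-1}+1-\frac{2\omega(n)}{\log\log n}$ holds with equality by the chosen value of $t_{-1}$.

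For the inductive step, fix $0\le h<M$ and let $J_h=\{j:(F,j)\in\mathcal{L}_h\}$ be the set of colors appearing at level $h$. Because the Construct $\mathcal{T}_f$ (I) procedure adds at most one child per color, $|J_h|=|\mathcal{L}_h|$, and $h<M$ gives $|J_h|\le \frac{n}{\omega(n)\log n}$, so Lemma~\ref{lemmaHitV} applies to $J_h$ and to each of its singletons. The first key step is a per-node lower bound on the replaceable set. By our hypothesis that no rainbow forest on $G_p\cup E(F^*)$ has more than $|E(F^*)|$ edges, every $e\in E_j$ must have both endpoints in a common component of $F_j$; otherwise $F_j+e$ would be a strictly larger rainbow forest. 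Hence each $e\in E_j$ closes a cycle in $F_j+e$ whose non-$e$ edges all belong to $R(F_j,j)$. Lemma~\ref{lemmaHitV} with $K=\{j\}$ gives $|V_j|\ge \frac{\log n}{\omega(n)}$, where $V_j$ is the set of vertices incident in $G_p$ to an edge of color $j$. A pigeonhole over the components of $F^*$ (each meeting $V_j$ in a paired fashion, since $V_j$-vertices pair up as endpoints of $E_j$-edges in a common component) then shows that the subforest of $F_j$ spanned by the paths between endpoints of $E_j$-edges has at least $|V_j|/2$ edges, so $|R(F_j,j)|\ge \frac{\log n}{2\omega(n)}$.

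It remains to translate these per-node bounds into a lower bound on $|\mathcal{L}_{h+1}|=\bigl|\bigcup_{j\in J_h}\phi(R(F_j,j))\setminus\mathcal{J}_h\bigr|$. Applying Lemma~\ref{lemmaHitV} a second time with $K=J_h$ yields $|V_{J_h}|\ge |J_h|\cdot\frac{\log n}{\omega(n)}$, which, combined with the preceding per-node argument, bounds the total $\sum_{j\in J_h}|R(F_j,j)|$ from below by roughly $|\mathcal{L}_h|\cdot\frac{\log n}{2\omega(n)}$. Combining this total with a bound on the per-color multiplicity $|\{j\in J_h:c\in\phi(R(F_j,j))\}|$, and using $|\mathcal{J}_h|\le\frac{n}{\omega(n)\log n}$ (from $h<M$) to absorb the colors already present, should give
\[
|\mathcal{L}_{h+1}|\;\ge\; |\mathcal{L}_h|\cdot(\log n)^{1-\frac{2\omega(n)}{\log\log n}},
\]
and hence $t_{h+1}\ge t_h+1-\frac{2\omega(n)}{\log\log n}$.

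The main obstacle is precisely this overlap analysis: a na\"ive ``total incidence divided by maximum multiplicity'' estimate, taking the worst-case multiplicity $|\mathcal{L}_h|$, would yield only $|\mathcal{L}_{h+1}|\ge\frac{\log n}{2\omega(n)}$, losing exactly the $|\mathcal{L}_h|$ factor we need to preserve. To close this gap, I plan to exploit the structure of how the forests $F_j$ arise from $F^*$ by chains of edge-replacements---so that the unique edge of color $c$ in $F_j$ varies non-trivially with $j$---to show that each color $c\in\phi(E(F^*))\cup\{\sigma\}$ can appear in $\phi(R(F_j,j))$ for only a small fraction of the $j\in J_h$. Controlling this typical multiplicity at the level of $(\log\log n)^{2/3}=(\log n)^{2\omega(n)/\log\log n}$, which is exactly the gap appearing in the recurrence, is the crux of the argument.
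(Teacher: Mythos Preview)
Your proposal correctly identifies the base case and the central difficulty, but the inductive step has a genuine gap exactly where you flag it: you never actually establish the multiplicity bound you need. The plan to show that each color $c$ lies in $\phi(R(F_j,j))$ for only an $(\log\log n)^{2/3}$-fraction of the $j\in J_h$ via ``the structure of how the forests $F_j$ arise from $F^*$ by chains of edge-replacements'' is not an argument; nothing in the setup prevents the same edge of $F^*$ from lying on the replacement cycle for many different colors $j$, and you give no mechanism for ruling this out. As you yourself note, without such a bound the per-node estimates collapse to $|\mathcal L_{h+1}|\ge \frac{\log n}{2\omega(n)}$, which loses the factor $|\mathcal L_h|$ and breaks the recursion.

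The paper's proof avoids the multiplicity issue entirely by working with the \emph{union} rather than the sum. Set $A_h=\bigcup_{(F,j)\in\mathcal L_h} E_j$ and $R=\bigcup_{(F,j)\in\mathcal L_h} R_{(F,j)}$. The same cycle observation you make shows that $R$ is an edge cover of $V(A_h)$, so $|R|\ge \tfrac{1}{2}|V(A_h)|$, and a single application of Lemma~\ref{lemmaHitV} to the full color set $K=J_h$ gives $|V(A_h)|\ge |\mathcal L_h|\cdot (\log n)^{1-\omega(n)/\log\log n}$. The remaining issue---that distinct edges of $R$ might share a color---is handled not by a multiplicity bound but by intersecting with $E(F^*)$: every forest stored in $\mathcal T_f$ differs from $F^*$ by at most one edge per ancestor, so with $Q=\sum_{i\le h}|\mathcal L_i|$ one has $|R\cap E(F^*)|\ge |R|-Q$, and edges of $F^*$ carry pairwise distinct colors. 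Subtracting the $Q$ colors already present in $\mathcal T_f$ and bounding $Q\le 2(\log n)^{t_h}$ by the inductive hypothesis (a geometric series) finishes the step. This union-then-intersect-with-$F^*$ device is the idea you are missing.
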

\begin{proof}[Proof of Claim \ref{claimF}]
We prove this claim by induction. When $h = 0$, $\mathcal{L}_h$ consists of the single element $(F^*, \sigma)$. Therefore, the number of elements in $\mathcal{L}_0$ is $1 = (\log n)^0$; hence, $t_0 = 0 = t_{-1} + 1 - \frac{2\omega(n)}{\log \log n}$.

Next, we show that if the lemma holds for values up to some $h < M$, then the lemma also holds for $h + 1$. In other words, we assume that $\mathcal L_h$ contains $(\log n)^{t_h}$ pairs $(F,j)$, and we would like to show that $\mathcal L_{h+1}$ contains at least $(\log n)^{t_h + 1 - \frac{2 \omega(n)}{\log \log n}}$ pairs. When $\mathcal L_{h+1}$ is constructed, for each pair $(F,j) \in \mathcal L_h$, we compute all children $(F',j')$ of $(F,j)$ for which $j'$ has not yet appeared as a color in $\mathcal{T}_f$. We would like to estimate the total number of these child nodes $(F',j')$, as these child nodes make up $\mathcal L_{h+1}$.

In order to estimate the number of nodes in $\mathcal L_{h+1}$, we will need to count the total number of colors among edges in forests of $\mathcal L_h$ that are replaced during the iteration of Line \ref{stepReplace} that generates $\mathcal L_{h+1}$. In order to make this estimate, 
consider a pair $(F,j) \in \mathcal L_h$. Every child of $(F,j)$ is obtained by choosing an edge $e \in E(G_j)$ and using $e$ to replace some edge $r \in E(F)$. If $\phi(r)$ has not yet appeared in a node of $\mathcal T_f$, then $\phi(r)$ will appear in a child node of $(F,j)$. Furthermore, we know that every edge of $F$ that also belongs to $F^*$ has a distinct color; therefore, if we can show that the number of replaced edges in $E(F) \cap E(F^*)$ is much larger than the number of colors already appearing in $\mathcal T_f$, then this will show that $(F,j)$ will have many children in $\mathcal T_f$. In fact, we will not consider single nodes $(F,j)$, but rather all nodes of $\mathcal L_h$ at the same time, but our process for estimating the overall number of children nodes that make up $\mathcal L_{h+1}$ will be just as we have described here.

We now rigorously compute a lower bound for the number of nodes in $\mathcal L_{h+1}$. For a pair $(F,j) \in \mathcal{L}_h$, let $R_{(F,j)} \subseteq E(F)$ denote the set of edges in $E(F)$ that are replaceable by the edge set $E_j \subseteq E(G_p)$, which consists of those edges in $G_p$ of color $j$.
Then, let 
$$R = \bigcup_{(F,j) \in \mathcal{L}_h} R_{(F,j)}.$$ 
We also let
$$A_h = \bigcup_{(F,j) \in \mathcal{L}_h} E_j,$$
and by Lemma \ref{lemmaHitV}, 
$$|V(A_h)| \geq |\mathcal L_h| \cdot \frac{\log n}{\exp(\omega(n))} = (\log n)^{t_h + 1 - \frac{\omega(n)}{\log \log n}}.$$
Now, recall that we have assumed that there is no rainbow forest on $G_p \cup E(F^*)$ with $|E(F^*)| + 1$ edges, and therefore, for each pair $(F,j)$ that we consider, every edge of $E_j$ must create a cycle when added to $F$.
Therefore, for each $(F,j) \in \mathcal{L}_h$, $R_{(T,j)}$ is an edge cover of $V(E_j)$; that is, every vertex incident to an edge of $E_j$ must also be incident to an edge of $R_{(F,j)}$. Therefore,
 ${R}$ is an edge cover of $V(A_h)$. As a single edge is only incident with two vertices, it must follow that 
$$|R| \geq \frac{1}{2}|V(A_h)| \geq \frac{1}{2} (\log n)^{t_h + 1 - \frac{\omega(n)}{\log \log n}}.$$
 
We write $R^* =  R \cap E(F^*)$.
As each color appearing in $E(F^*)$ is distinct, we observe that each edge $r \in R^*$ has a distinct color. Furthermore, for any node $(F,j) \in \mathcal T_f$ and any child $(F',j')$, $F$ and $F'$ differ by at most one edge. We let $$Q = \left | \bigcup_{i = 0}^{h} \mathcal L_i \right |,$$
and we see that immediately before $\mathcal L_{h+1}$ is constructed, 
$$\left | \bigcup_{(F,j) \in \mathcal T_f} E(F) \setminus E(F^*) \right | \leq Q.$$
 Therefore, $ |R^*| \geq |R| - Q $. Furthermore, the number of distinct colors already appearing in $ \bigcup_{i = 0}^{h} \mathcal L_i $ is equal to $Q$. Therefore, as each edge $ R^*$ whose color does not already appear in $ \bigcup_{i = 0}^{h} \mathcal L_i $ gives a new node in $\mathcal L_{h+1}$, the number of new nodes in $\mathcal L_{h+1}$ satisfies
$$|\mathcal L_{h+1}| \geq | R| - 2 Q \geq \frac{1}{2} (\log n)^{t_h + 1 - \frac{\omega(n)}{\log \log n}} - 2 Q.$$
 Hence, to finish the induction step, we only need to show that $Q$ is not too large. By the induction hypothesis,

\begin{eqnarray*}
Q = \sum_{i = 0}^{h}|\mathcal L_i| &= &(\log n)^{t_h }+ (\log n)^{t_{h-1} } + \dots + (\log n)^{t_{0} } \\
 & <&  (\log n)^{t_{h}} \left ( 1 + (\log n)^{-(1 - \frac{2\omega(n)}{\log \log n})}  + (\log n)^{-2(1 - \frac{2\omega(n)}{\log \log n})} + \dots \right )\\
 &  <& 2(\log n)^{t_h} .
\end{eqnarray*}

Therefore, the number of nodes in $\mathcal L_{h+1}$ is at least $\frac{1}{2} (\log n)^{t_h + 1 - \frac{\omega(n)}{\log \log n}} - 4 (\log n)^{t_h} > (\log n)^{t_h + 1 - \frac{2\omega(n)}{\log \log n}}$, and induction is complete. This proves Claim \ref{claimF}.
\end{proof}

By Claim \ref{claimF}, for some value $M < \log n$, $\mathcal{L}_{M}$ contains at least $ \frac{n}{ \omega(n) \log n}$ elements. Now, we may extend $\mathcal T_f$ so that it contains $\frac{n}{\log \log n}$ nodes, using the following procedure:
\begin{algorithm}[H]
\caption{Construct $\mathcal T_f$ (II)}\label{alg:cap}
\begin{algorithmic}[1]
\item Delete nodes from $\mathcal L_M$ until $\mathcal L_M$ has exactly $\lfloor \frac{n}{\omega(n) \log n } \rfloor $ nodes.
\item For each pair $(F,j) \in \mathcal{L}_M$, add to $\mathcal{T}_f$ all children of $(F,j)$ in $\mathcal T$ whose color has not yet appeared in $\mathcal{T}_f$, adding at most one child of each color. Call this new set of nodes $\mathcal L_{M+1}$.
\end{algorithmic}
\end{algorithm}
By the argument used in Claim \ref{claimF}, $\mathcal L_{M+1}$ must contain at least
$$ \left \lfloor \frac{n}{\omega(n) \log n } \right \rfloor \cdot (\log n)^{1 - \frac{2\omega(n)}{\log \log n}} > \frac{n}{ 2 \omega(n) } \cdot e^{-2 \omega(n)} > n \cdot e^{-3 \omega(n)} = \frac{n}{\log \log n}$$ nodes. As each node of $\mathcal T_f$ must contribute a distinct element to $J$, it follows that $|J| \geq \frac{n}{\log \log n}$. This completes the proof of Lemma \ref{lemmaMany}.
    \end{proof}

\subsection{Adding edges of the remaining colors:} 
Recall that we have a rainbow forest $F_0$ on $G_p$ with at least $n - \log^3 n$ edges.
We will finally show that with the help of our edge replacement technique, we may successively build larger rainbow forests $F^*$ until we have a rainbow spanning tree on $G'$. We iterate the following procedure, which builds a rainbow spanning tree on $G'$. 
\begin{algorithm}[H]
\caption{Connect Forest Components}\label{alg:cfc}
\begin{algorithmic}[1]
\State $F^* \gets F_0$
\State Set $t$ equal to the number of components in $F^*$.
\While{$t > 1$}
\label{stepGo} \If{a rainbow forest $F$ exists in $G_p \cup E(F^*)$ with $|E(F^*)| + 1$ edges}
\State $F^* \gets F$,
\Else
\State Choose some color $\sigma \in [n-1] \setminus \phi(E(F))$.
\State \label{stepJ} 
\multiline{Let $J \subseteq \phi(E(F^*)) \cup \{\sigma\}$ be the set of values $j$ 
for which there exists a rainbow forest $F$ on $G_p \cup E(F^*)$  
satisfying:}
\begin{itemize}
\setlength{\itemindent}{1.2cm}
\item $\phi(F) = \phi(E(F^*)) \cup \{\sigma \} \setminus \{j\}$, and
\item The components of $F$ partition $V(G)$ in the same way as those of $F^*$.
\end{itemize}
\State \label{stepSparseColor} 
Choose a value $j \in J$ for which there exists an edge $e \in E(G_{s_t})$ satisfying:
\begin{itemize}
\setlength{\itemindent}{1.2cm}
\item $e$ does not belong to any graph appearing before $G_{s_t}$ in the ordering of $\mathcal G$,
\item $\phi(e) = j$,
\item $e$ has endpoints in two distinct components of $F$.
\end{itemize}
\State 
\multiline{Let $F$ be a rainbow forest whose components partition $V(G)$ in the same way as those of $F^*$, and such that $\phi(F) = \phi(F^*) \cup \{\sigma\} \setminus \{j\}$.}
\State $F^* \gets F + e$
\EndIf
\State $t \gets t - 1$ \Comment{Set $t$ equal to the number of components of $F^*$.}
\EndWhile
\end{algorithmic}
\end{algorithm}

Assuming that the Connect Forest Components procedure always iterates successfully, the procedure will produce increasingly large rainbow forests $F^*$ on $G'$. 
Furthermore, when the procedure finally terminates, we will have a rainbow spanning tree in $G'$.
Therefore, it remains only to show that the procedure a.a.s.~never fails to execute any step. 
We observe that since $F_0$ has at least $n - \lfloor \log^3 n\rfloor$ edges, the value of $t$ in each iteration of the Connect Forest Components procedure is at most $\lfloor \log^3 n \rfloor - 1$, so the graph $G_{s_t}$ called in each iteration is always well defined.
Hence, Line \ref{stepSparseColor} is the only step that might fail, so we only need to check that Line \ref{stepSparseColor} is a.a.s.~executed successfully each time that it is called.

By Lemma \ref{lemmaMany}, the set $J$ produced in Line \ref{stepJ} always has at least $\frac{n}{\log \log n}$ elements.
 Also, by Lemma \ref{lemmaStraddle}, in each iteration, $\binom{V(G)}{2}$ contains at least $\frac{1}{4} \lambda t $ edges with endpoints in distinct components of $F$
 that do not belong to any graph appearing before $G_{s_t}$ in $\mathcal G$. Furthermore, $G_{s_t}$ is never observed before being used in Line \ref{stepSparseColor}, and therefore the edges in $G_{s_t}$ are independent of any previous observations. 
 Therefore, the probability that Line \ref{stepSparseColor} fails on a given iteration is at most
$$\left (1 - \frac{s_t}{\log \log n} \right )^{ \frac{1}{4} \lambda t } < \exp \left ( - s_t \cdot \frac{\lambda  t}{4 \log \log n} \right ) = \exp \left (- \Omega \left ( \frac{\sqrt{\log n}}{ \log \log n} \right)  \right ) = o \left ( (\log n)^{-3} \right ).$$ 
As at most $(\log n)^3$ edges need to be added to $F_0$ to obtain a rainbow spanning tree, Line \ref{stepSparseColor} is called at most $(\log n)^3$ times. Thus, it a.a.s.~holds that the Construct Forest Components procedure will successfully iterate until producing a rainbow spanning tree on $G'$. This completes the proof of Theorem \ref{thmTree}. \qed

\section{Conclusion}
A \emph{matroid} is a set $E$ of elements along with a nonempty  family $\mathcal B \subseteq 2^{E}$ of subsets of $E$ called \emph{bases}, such that
if $A,B \in \mathcal B$ 
and $a \in A \setminus B$, then there exists an element $b \in B$ for which $B \setminus \{b\} \cup \{a\} \in \mathcal B$.
This condition on $\mathcal B$ is often called the \emph{basis exchange property}.
A connected graph $G$ is an example of a matroid, in which case $E = E(G)$, and $\mathcal B$ consists of 
the family $\mathcal T(G)$ spanning trees of $G$. In this case, the basis exchange property describes the fact that for any spanning tree $T$ of $G$ and any edge $e \in E(G) \setminus E(T)$, $T$ has at least one edge that is replaced by $e$. 
Other graph properties such as degree and edge-connectivity have analagous notions in matroid theory.
See \cite{matroids} for a detailed introduction to matroids.

Our method for proving Theorem \ref{thmTree} relies heavily on replacing edges in forests of a random subgraph of a graph. Given that edge replacement has a natural matroid equivalent, it is natural to ask if our techniques can be generalized to the setting of matroids, as follows. Suppose that $(E,\mathcal B)$ is a matroid, and suppose that $E' \subseteq E$ is a randomly chosen subset of $E$ whose elements are randomly colored. Under what conditions does the set $E'$ a.a.s.~contain a rainbow-colored basis from $\mathcal B$?

\section{Acknowledgments}
I am grateful to the graph theory group of Simon Fraser University for listening to and commenting on several presentations of the main ideas of this paper. In particular, I am grateful to Kevin Halasz, Bojan Mohar, Ladislav Stacho, and Luis Goddyn for helpful discussions. I am also grateful to Joshua Erde for reading an earlier draft of this paper and for pointing out important background results on this topic.

\raggedright
\bibliographystyle{abbrv}
\bibliography{bib}

\begin{thebibliography}{10}

\bibitem{AHL}
E.~Aigner-Horev, D.~Hefetz, and A.~Lahiri.
\newblock Rainbow trees in uniformly edge-coloured graphs, 2021.

\bibitem{ABS}
N.~Alon, R.~A. Brualdi, and B.~L. Shader.
\newblock Multicolored forests in bipartite decompositions of graphs.
\newblock {\em J. Combin. Theory Ser. B}, 53(1):143--148, 1991.

\bibitem{Barsky}
D.~J. Barsky, G.~R. Grimmett, and C.~M. Newman.
\newblock Dynamic renormalization and continuity of the percolation transition
  in orthants.
\newblock In {\em Spatial stochastic processes}, volume~19 of {\em Progr.
  Probab.}, pages 37--55. Birkh\"{a}user Boston, Boston, MA, 1991.

\bibitem{Bollobas}
B.~Bollob\'{a}s.
\newblock {\em Random graphs}, volume~73 of {\em Cambridge Studies in Advanced
  Mathematics}.
\newblock Cambridge University Press, Cambridge, second edition, 2001.

\bibitem{Brualdi}
R.~A. Brualdi and S.~Hollingsworth.
\newblock Multicolored trees in complete graphs.
\newblock {\em J. Combin. Theory Ser. B}, 68(2):310--313, 1996.

\bibitem{CHL}
F.~Chung, P.~Horn, and L.~Lu.
\newblock The giant component in a random subgraph of a given graph.
\newblock In {\em Algorithms and models for the web-graph}, volume 5427 of {\em
  Lecture Notes in Comput. Sci.}, pages 38--49. Springer, Berlin, 2009.

\bibitem{ER}
P.~Erd\H{o}s and A.~R\'{e}nyi.
\newblock On random graphs. {I}.
\newblock {\em Publ. Math. Debrecen}, 6:290--297, 1959.

\bibitem{ERMatrix}
P.~Erd\H{o}s and A.~R\'{e}nyi.
\newblock On random matrices.
\newblock {\em Magyar Tud. Akad. Mat. Kutat\'{o} Int. K\"{o}zl.}, 8:455--461
  (1964), 1964.

\bibitem{deLaVega}
W.~Fernandez de~la Vega.
\newblock Long paths in random graphs.
\newblock {\em Studia Sci. Math. Hungar.}, 14(4):335--340, 1979.

\bibitem{FriezeMcKay}
A.~Frieze and B.~D. McKay.
\newblock Multicolored trees in random graphs.
\newblock In {\em Proceedings of the {F}ifth {I}nternational {S}eminar on
  {R}andom {G}raphs and {P}robabilistic {M}ethods in {C}ombinatorics and
  {C}omputer {S}cience ({P}ozna\'{n}, 1991)}, volume~5, pages 45--56, 1994.

\bibitem{GKMO}
S.~Glock, D.~K\"{u}hn, R.~Montgomery, and D.~Osthus.
\newblock Decompositions into isomorphic rainbow spanning trees.
\newblock {\em J. Combin. Theory Ser. B}, 146:439--484, 2021.

\bibitem{Grimmett}
G.~Grimmett.
\newblock {\em Percolation}, volume 321 of {\em Grundlehren der mathematischen
  Wissenschaften [Fundamental Principles of Mathematical Sciences]}.
\newblock Springer-Verlag, Berlin, second edition, 1999.

\bibitem{HornThesis}
P.~K. Horn.
\newblock {\em Random subgraphs of a given graph}.
\newblock ProQuest LLC, Ann Arbor, MI, 2009.
\newblock Thesis (Ph.D.)--University of California, San Diego.

\bibitem{Mitzenmacher}
M.~Mitzenmacher and E.~Upfal.
\newblock {\em Probability and computing}.
\newblock Cambridge University Press, Cambridge, 2005.
\newblock Randomized algorithms and probabilistic analysis.

\bibitem{Schrijver}
A.~Schrijver.
\newblock {\em Combinatorial optimization. {P}olyhedra and efficiency. {V}ol.
  {B}}, volume~24 of {\em Algorithms and Combinatorics}.
\newblock Springer-Verlag, Berlin, 2003.
\newblock Matroids, trees, stable sets, Chapters 39--69.

\bibitem{SuzukiGnC}
K.~Suzuki.
\newblock A necessary and sufficient condition for the existence of a
  heterochromatic spanning tree in a graph.
\newblock {\em Graphs Combin.}, 22(2):261--269, 2006.

\bibitem{matroids}
D.~J.~A. Welsh.
\newblock {\em Matroid theory}.
\newblock L. M. S. Monographs, No. 8. Academic Press [Harcourt Brace
  Jovanovich, Publishers], London-New York, 1976.

\end{thebibliography}

\end{document}